\documentclass[UTF-8,reqno,12pt]{amsart}
\usepackage{enumerate,amsmath,amssymb,amsthm}
\setlength{\topmargin}{0cm}
\setlength{\oddsidemargin}{0.5cm}\setlength{\evensidemargin}{0.5cm}
\setlength{\textwidth}{15truecm}\setlength{\textheight}{21.2truecm}
\usepackage{amssymb,url,color}
\usepackage{hyperref}
\usepackage{amsmath,amsthm}
\usepackage{mathrsfs}
\usepackage[notref,notcite]{showkeys}
\usepackage{bm}

\allowdisplaybreaks[4]

\newtheorem{thm}{Theorem}[section]
\theoremstyle{Condition}

\newtheorem{cor}{Corollary}[section]
\newtheorem{prop}{Proposition}[section]
\newtheorem{lem}{Lemma}[section]
\newtheorem{rem}{Remark}[section]

\theoremstyle{Problem}

\theoremstyle{Assumption}

\theoremstyle{Definition}

\numberwithin{equation}{section}

\def\beq{\begin{equation}}
\def\deq{\end{equation}}
\setlength{\parskip}{0.5em}

\allowdisplaybreaks 

\bfseries\rmfamily 

\def\cC{{\mathcal C}}
\def\cD{{\mathcal D}}

\def\cF{{\mathcal F}}

\def\cL{{\mathcal L}}

\def\cP{{\mathcal P}}

\def\mE{{\mathbb E}}

\def\mI{{\mathbb I}}

\def\mN{{\mathbb N}}

\def\mP{{\mathbb P}}

\def\mR{{\mathbb R}}

\def\mW{{\mathbb W}}

\def\geq{\geqslant}
\def\leq{\leqslant}

\def\eps{\varepsilon}
\def\t{\tau}

\def\a{\alpha}
\def\om{\omega}
\def\Om{\Omega}
\def\b{\beta}

\def\d{\delta}

\def\l{\lambda}
\def\s{\sigma}

\def\[{{\Big[}}
\def\]{{\Big]}}
\def\<{{\langle}}
\def\>{{\rangle}}
\def\({{\Big(}}
\def\){{\Big)}}

\def\sgn{\mbox{\rm sgn}}

\def\dif{{\rm d}}

\def\min{{\mathord{{\rm min}}}}

\def\={&\!\!=\!\!&}
\def\bt{\begin{theorem}}
\def\et{\end{theorem}}
\def\bl{\begin{lemma}}
\def\el{\end{lemma}}
\def\br{\begin{rem}}
\def\er{\end{rem}}

\begin{document}

\title[Jump-type McKean-Vlasov SDEs]
{Well-posedness and propagation of chaos for jump-type McKean-Vlasov SDEs with irregular coefficients}

\author{Zhen Wang$^{1\S}$, Jie Ren$^{2\dag}$ and Yu Miao$^{3\dag}$}

\thanks{$^{\S}$Corresponding author at: College of Mathematics and Information Science, Henan Normal University, Xinxiang, Henan $453007$, P. R. China. E-mail addresses$:$ wangzhen881025@163.com }
\thanks{$^{\dag}$These authors contributed equally to this work.}

\dedicatory{$^1$College of Mathematics and Information Science,
        Henan Normal University,\\
        Xinxiang, Henan $453007$, P. R. China.
        \\
        $^2$ College of Mathematics and Information Statistics, Henan University of Economics and Law, Zhengzhou, Henan, 450000, P.R.China.\\
$^3$ College of Mathematics and Information Science, Henan Normal University, \\
Xinxiang, Henan, 453007, P. R. China
}

\thanks{This work is partially supported by an NNSFC grant of China (No.11971154) and (No.11901154)}.

\begin{abstract}
\ \ In this paper, we study the existence and pathwise uniqueness of strong solutions for jump-type McKean-Vlasov SDEs
 with irregular coefficients but uniform linear growth assumption. Moreover, the propagation of chaos and the convergence rate for Euler's scheme of jump-type McKean-Vlasov SDEs
  are also obtained by taking advantage of Yamada-Watanabe's approximation approach and stopping time.
 \\
 Keywords.  \ \  Jump-type McKean-Vlasov SDEs, Propagation of chaos, Euler approximation, Irregular coefficients.\\
{\it AMS Mathematics Subject Classification $(2010)$$:$} 60H10, 60H15, 60H20.\\
\end{abstract}

\maketitle

\section{Introduction}
McKean-Vlasov stochastic differential equations (SDEs) are referred to as distribution dependent SDEs or mean field SDEs, which are expressed as limits
of interacting diffusions and which coefficients depend on the solution of the equation and on the law of this solution. However, the mean field interactions are derived as a dependency of the coefficients on the empirical measure of system, if $N$ goes to infinity, the empirical measure converges to the law of any particle of the limit system. After it was first proposed by McKean \cite{M}, using Kac's formalism of molecular chaos \cite{K}. There are endless studies on McKean-Vlasov SDEs (cf.\cite{CD,S}), including well-posedness (\cite{d,RZh,WRM}), ergodicity (\cite{LMW,W1}), Feynman-Kac formulae (\cite{BLPR,CM,RRW}) and the principle of large deviation (\cite{dSW,RWW}) ect. At the same time, McKean-Vlasov SDEs have been applied extensively in stochastic control, mathematical finance (\cite{Sh,T}), stochastic volatility \cite{CI} and so on.

The strong well-posedness and the propagation of chaos are often studied for McKean-Vlasov SDEs without jump. For instance, G\"arner \cite{G} verified these two properties assuming that the coefficients are Lipschitz continuous. Under the local Lipschitzian in the state variable, but uniform linear growth assumption, Li and so on \cite{LMW2} developed the existence and uniqueness of sulutiond of McKean-Vlasov SDEs.
About non-Lipschitz case, Ding and Qiao \cite{DQ} treated the strong well-posedness by the weak existence and pathwise
uniqueness. While, it is noticed that in \cite{BH}, the authors demonstrated the propagation of chaos and the convergence rate of Euler's scheme with irregular coefficients taking advantage of Yamada-Watanabe's approximation approach and Zvonkin's transformation. For additive noise case, the existence and uniqueness of strong solutions with irregular drift were established in \cite{BMP}. With singular $L_p$-interactions as well as for the moderate interaction particle systems on the level of particle trajectories, researchers \cite{HRZ} showed the strong convergence of the propagation of chaos for the particle approximation of McKean-Vlasov SDEs. Moreover, using a discretized version of Krylov's estimate, Zhang \cite{Zh2} studied the weak and strong convergences for Euler's approximation of McKean-Vlasov SDEs with non-degenerate conditions. Huang and Wang \cite{HW} proved the existence and uniqueness of McKean-Vlasov SDEs with non-degenerate noise, under integrability conditions on distribution dependent coefficients.

On the other hand, the study of McKean-Vlasov SDEs with jumps, there are still relatively few results to discuss. It is a classical fact that when coefficients are Lipschitz continuous in \cite{Gr}, the authors obtained some results about the strong well-posedness and the propagation of chaos for McKean-Vlasov SDEs with discrete jumps. In \cite{ADF}, the authors considered McKean-Vlasov SDEs with simultaneous jumps and proved the propagation of chaos in this model under globally Lipschitz assumptions.  When the coefficients are locally Lipschitz continuous, Xavier \cite{X} emphasized the strong well-posedness and the propagation of chaos property for McKean-Vlasov SDEs with simultaneous jumps.

Based on Heston's stochastic volatility model with jump-to-default \cite{PW} and Hybrid Heston-CIR Model with jumps \cite{BW} in mathematical finance, and since that the distribution of stochastic processes can be regarded as macroproperty, we carry out this research.
In this paper, the novely of our results is work on the McKean-Vlasov SDEs with discrete jumps under irregular coefficients, but uniform linear growth assumption. The questions about the strong well-posedness, the propagation of chaos and Euler's approximations have also been studied in this $1$-dimensional framework taking advantage of Yamada-Watanabe's approximation approach and stopping time. Namely, we promote Theorem 1.2 in \cite{BH} to results with jumps and $p$-th moment $(p>0)$, while the convergence rate of jump-type Euler's approximations just get similar results to Theorem 1.3 in \cite{BH}.

Let $\left(\Om,\cF,\cP;(\cF_t)_{t\geq 0}\right)$ be a complete filtration probability space, endowed with a standard $1$-dimensional Brownain motion $(W_t)_{t\geq 0}$ on the probability space. And $\{p_0(t)\}$ and $\{p_1(t)\}$ are two $\cF_t$-Poisson point processes on $U_0$ and $U_1$ with characteristic measures $\nu_0(\dif u)$ and $\nu_1(\dif u)$, such that $\{W_t\}$, $\{p_0(t)\}$ and $\{p_1(t)\}$ are independent of each other. Let $N_0(\dif s,\dif u)$ and $N_1(\dif s,\dif u)$ be Poisson random measures associated with $\{p_0(t)\}$ and $\{p_1(t)\}$, respectively. Suppose that $\mu$ is a probability measure on $\mR$ and $\mu_t$ stands for the law of $X_t$, the initial value $X_0=\xi$ is a $\nu$-distributed, and $\cF_0$-measurable $\mR$-valued random variable, $b=b_1+b_2$ and the coefficients
$$
b_i:\mR\times\cP(\mR)\rightarrow\mR,i=1,2;\ \  \s:\mR\rightarrow\mR\otimes\mR
$$
are Borel measurable functions, and $f_0:\mR\times\cP(\mR)\times U_0\rightarrow\mR$ and $f_1:\mR\times\cP(\mR)\times U_1\rightarrow\mR$ also are Borel measurable functions. Consider the following McKean-Vlasov stochastic differential equations:
\beq\label{a-1}
\aligned
 X_{t}=&\xi+\int_0^tb\left(X_{s},\mu_s\right)\dif s+\int_0^t\s\left(X_{s}\right)\dif W_s
 +\int_0^{t+}\int_{U_0}f_0(X_{s-},\mu_s,u)\widetilde{N}_0(\dif s,\dif u)\\&+\int_0^{t+}\int_{U_1}f_1(X_{s-},\mu_s,u)N_1(\dif s,\dif u),\\
\endaligned
 \deq
where
$$
\widetilde{N}_0(\dif t,\dif u)=N_0(\dif t,\dif u)-\nu_0(\dif z)\dif t
$$
is the compensated Poisson random measure of $N_1(\dif t,\dif u)$. At the same time, $X_t\neq X_{t-}$ for at most countably many $t>0$, $b(X_{t-}, \mu_t)$ and $\s(X_{t-}, \mu_t)$ can be replaced by $b(X_{t}, \mu_t)$ and $\s(X_{t}, \mu_t)$ for fixed the law $\mu_t$, respectively.
Let $U_2$ be a Borel subset of $U_1$ satisfying $\nu_1(U_1\backslash U_2)<\infty$. Consider that the stochastic equation with jump:
\beq\label{1-a}
\aligned
X_{t}=&\xi+\int_0^tb\left(X_{s},\mu_s\right)\dif s+\int_0^t\s\left(X_{s}\right)\dif W_s
 +\int_0^{t+}\int_{U_0}f_0(X_{s-},\mu_s,u)\widetilde{N}_0(\dif s,\dif u)\\&+\int_0^{t+}\int_{U_2}f_1(X_{s-},\mu_s,u)N_1(\dif s,\dif u),\\
 \endaligned
\deq
\begin{prop}\cite{FL}
Then (\ref{a-1}) has a strong solution if (\ref{1-a}) has a strong solution. And if the pathwise uniqueness of solutions holds for (\ref{1-a}), it also holds for (\ref{a-1}).
\end{prop}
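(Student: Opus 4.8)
The plan is to run the classical interlacing (Ikeda--Watanabe) construction, whose whole point is that the only structural difference between (\ref{a-1}) and (\ref{1-a}) is the range of the last integral, $U_1$ for (\ref{a-1}) versus $U_2$ for (\ref{1-a}), while by hypothesis the difference set $D:=U_1\setminus U_2$ has finite mass $\nu_1(D)<\infty$. Consequently $t\mapsto N_1([0,t]\times D)$ is a Poisson process of finite rate $\nu_1(D)$; let $0=:\tau_0<\tau_1<\tau_2<\cdots$ be its successive jump times and $\zeta_n:=p_1(\tau_n)\in D$ the corresponding marks, so each $\tau_n$ is an $(\cF_t)$-stopping time, $\zeta_n$ is $\cF_{\tau_n}$-measurable, and $\tau_n\uparrow\infty$ almost surely. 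The elementary but crucial observation is that on any time interval meeting no $\tau_n$ the integrals $\int_{U_1}f_1(\cdot,\mu_s,u)\,N_1(\dif s,\dif u)$ and $\int_{U_2}f_1(\cdot,\mu_s,u)\,N_1(\dif s,\dif u)$ coincide, while the drift, the Brownian term and the compensated $N_0$-term are literally identical in the two equations; at each $\tau_n$, the $U_1$-integral in (\ref{a-1}) contributes precisely the jump $f_1(X_{\tau_n-},\mu_{\tau_n},\zeta_n)$ that is absent from (\ref{1-a}).

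For the existence assertion, I would paste: starting from a strong solution of (\ref{1-a}) on $[0,\infty)$ issued from $\xi$, let $X$ agree with it on $[0,\tau_1)$; set $X_{\tau_1}:=X_{\tau_1-}+f_1(X_{\tau_1-},\mu_{\tau_1},\zeta_1)$; on $[\tau_1,\tau_2)$ let $X$ be a strong solution of (\ref{1-a}) driven by the time-shifted noise $\big(W_{\tau_1+\cdot}-W_{\tau_1},\,N_0((\tau_1+\cdot),\cdot),\,N_1((\tau_1+\cdot),\cdot)\big)$ and started from $X_{\tau_1}$; and continue inductively over $[\tau_n,\tau_{n+1})$, inserting $f_1(X_{\tau_n-},\mu_{\tau_n},\zeta_n)$ at each $\tau_n$. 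Since $\tau_n\uparrow\infty$, this yields an $(\cF_t)$-adapted c\`adl\`ag process on all of $[0,\infty)$; on each $(\tau_{n-1},\tau_n)$ its increments are those of (\ref{1-a}), which there coincide with those of (\ref{a-1}), and the imposed jumps reproduce exactly the increments of $\int_{U_1\setminus U_2}f_1\,\dif N_1$, so that summing contributions over $[0,t]$ shows that $X$ solves (\ref{a-1}).

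For pathwise uniqueness, let $X$ and $\tilde X$ solve (\ref{a-1}) on one stochastic basis driven by the same noise with $X_0=\tilde X_0=\xi$. On $[0,\tau_1)$ both solve (\ref{1-a}) with the same noise and initial value, so pathwise uniqueness for (\ref{1-a}), localised at the stopping time $\tau_1$, forces $X\equiv\tilde X$ on $[0,\tau_1)$; then $X_{\tau_1-}=\tilde X_{\tau_1-}$, the $\tau_1$-jumps coincide, and $X_{\tau_1}=\tilde X_{\tau_1}$. Iterating the same step over $[\tau_n,\tau_{n+1})$ with the shifted noise and using $\tau_n\uparrow\infty$ gives $X\equiv\tilde X$ on $[0,\infty)$.

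I expect the real work to lie in the restart step of the existence part: one must solve (\ref{1-a}) from the random time $\tau_n$ and the random, $\cF_{\tau_n}$-measurable initial datum $X_{\tau_n}$, and splice the pieces together while keeping the result adapted and c\`adl\`ag. This calls for upgrading the assumed strong existence (from an $\cF_0$-measurable initial value) to random restarts, via conditioning on $\cF_{\tau_n}$ and the strong Markov property together with the independence and stationarity of the increments of $(W,N_0,N_1)$ after $\tau_n$. A secondary point, specific to the McKean--Vlasov feature, is that the flow $(\mu_t)$ feeding the coefficients must be kept common to all the spliced pieces and to both $X$ and $\tilde X$; once this bookkeeping is in place, the interlacing algebra and the divergence $\tau_n\uparrow\infty$, which uses only $\nu_1(U_1\setminus U_2)<\infty$, are routine.
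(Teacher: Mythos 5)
Your interlacing argument is essentially the proof of the cited Fu--Li proposition: they prove exactly this statement (for ordinary state-dependent jump SDEs) by ordering the jump times of the finite-intensity piece $N_1|_{U_1\setminus U_2}$, matching increments between jumps, and inserting the missing jump at each $\tau_n$; the uniqueness direction is the same localisation at stopping times. Your final two paragraphs correctly identify where extra care is needed (random restart times, adaptedness of the spliced process), and these are standard.

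The one point I would push back on is your characterisation of the McKean--Vlasov feature as ``a secondary point'' that is ``routine once the bookkeeping is in place.'' For a McKean--Vlasov equation, a strong solution is a process $X$ together with the fixed-point condition $\mu_t=\cL(X_t)$. If $X$ solves \eqref{1-a} with its own law flow $\mu_t=\cL(X_t)$ and $Y$ is the interlaced process you build from it, then on $(\tau_{n-1},\tau_n)$ the increments of $Y$ are those of \eqref{1-a} \emph{driven by $\mu_t$}, and at $\tau_n$ you insert $f_1(Y_{\tau_n-},\mu_{\tau_n},\zeta_n)$ --- in all cases the law entering the coefficients is $\mu_t=\cL(X_t)$, not $\cL(Y_t)$. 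Since $Y$ has extra jumps, $\cL(Y_t)\neq\cL(X_t)$ in general, so $Y$ is a solution of \eqref{a-1} with a \emph{frozen} exogenous flow $\mu_t$, not a McKean--Vlasov solution of \eqref{a-1}. Your sentence ``the flow $(\mu_t)$ feeding the coefficients must be kept common to all the spliced pieces'' is true but does not close this gap: keeping the flow common to the pieces is easy, but it leaves you with the wrong flow. The Fu--Li proposition, as stated and proved in that reference, is for equations where the coefficients do not depend on the law, so the interlacing closes up; in the distribution-dependent setting one would either (i) first apply the proposition at the level of the frozen-law equations used in the Picard iteration of Theorem~\ref{thm-1} and then pass to the fixed point separately, or (ii) rerun the fixed-point argument directly on \eqref{a-1}. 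As written, both your proposal and the bare citation in the paper leave this step implicit, and it is the only non-routine step in the McKean--Vlasov case.
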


In the present paper,
we make the following assumptions: Supposing that for any $x,y\in\mR$, and $\mu,\nu\in\cP(\mR)$, we have\\
$\mathbf{(H_1)}$  For some constant $K_1>0$ and $\b\in(0,1]$,
$$
\left|b_1(x,\mu)-b_1(x,\nu)\right|\leq K_1\mW_2(\mu,\nu),
$$
$$
\left|b_1(x,\mu)-b_1(y,\mu)\right|\leq K_1|x-y|^{\b},
$$
$$
\left|b_2(x,\mu)-b_2(y,\nu)\right|\leq K_1\left(|x-y|+\mW_2(\mu,\nu)\right),
$$
where $x\rightarrow b_1(x,\mu)$ is continuous and non-increasing.   \\
$\mathbf{(H_2)}$ For any $\alpha\in[1/2,1]$,
$$
|\s(x)-\s(y)|\leq K_2\left|x-y\right|^{\alpha},
$$
where $K_2$ is a positive constant.\\
$\mathbf{(H_3)}$ There exists the positive constant $K_3$ such that
$$
\aligned
&\int_{U_0}\max\left\{|x-y|\left|f_0(x,\mu,u)-f_0(y,\nu,u)\right|,\left|f_0(x,\mu,u)-f_0(y,\nu,u)\right|^{2}\right\}\nu_0(\dif u)\\
\leq &K_3 \left(|x-y|^2+\mW_{2}(\mu,\nu)^2\right),\\
&\int_{U_2}\max\left\{|x-y|\left|f_1(x,\mu,u)-f_1(y,\nu,u)\right|,\left|f_1(x,\mu,u)-f_1(y,\nu,u)\right|^{2}\right\}\nu_(\dif u)\\
\leq &K_3 \left(|x-y|^2+\mW_{2}(\mu,\nu)^2\right).\\
\endaligned
$$
$\mathbf{(H_4)}$ (Linear growth conditions) There are the positive constants $M_1$ and $M_2$,
such that for any $t\in[0,T]$, $x\in\mR$ and $\mu\in\cP(\mR)$,
$$
|b(x,\mu)|^2\vee\|\s(x,\mu)\|^2\leq M_1(1+|x|^2+\mW_2(\mu,\delta_0)^2),
$$
$$
\aligned
&\int_{U_0}\left|f_0(x,\mu,u)\right|\wedge\left|f_0(x,\mu,u)\right|^2\nu_0(\dif u)
\leq M_2(1+|x|+\mW_2(\mu,\delta_0)),\\
&\int_{U_2}\left|f_1(x,\mu,u)\right|\nu_1(\dif u)
\leq M_3(1+|x|+\mW_2(\mu,\delta_0)),\\
\endaligned
$$
where $\delta_0$ denotes the Dirac measure at $0$.\\

On the above assumptions, our first result is the strong well-posedness and properties of $p$-th moment $(p>0)$.
\begin{thm}\label{thm-1}
Let suppose that Assumptions $\mathbf{(H_1)}$-$\mathbf{(H_4)}$ hold. For any $\cF_0$-measurable random variable $\xi$ satisfying $\mE|\xi|^{\beta}<\infty$ for any $\beta>0$, then (\ref{a-1}) has a unique strong solution $\{X_t\}_{t\geq 0}\in L^2(\Omega,\mR)$ for $t\in[0,T]$ with the initial value $\xi$. For any $p>0$, we have
\beq\label{a-2}
\mE\left(\sup_{t\in[0,T]}|X_t|^p\right)< \infty.
\deq
\end{thm}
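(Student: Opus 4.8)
\emph{Plan.} By the reduction of \cite{FL} recalled just above, it suffices to prove well-posedness for the truncated equation (\ref{1-a}); the finitely many ``big'' jumps carried by $\nu_1$ on $U_1\setminus U_2$ (finitely many on $[0,T]$ since $\nu_1(U_1\setminus U_2)<\infty$) are then reinstated by interlacing, which preserves strong existence, pathwise uniqueness and the moment bound (\ref{a-2}). For (\ref{1-a}) I would argue in three steps: (i) a priori $p$-th moment estimates for any solution, which already yields (\ref{a-2}); (ii) pathwise uniqueness; (iii) existence of a weak solution by approximation. Steps (ii) and (iii), via the Yamada--Watanabe-type theorem for distribution-dependent equations, then produce the unique strong solution.

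\emph{Moment estimates.} Fix a solution $\{X_t\}$ with $\mu_t=\Law(X_t)$ and set $\tau_R=\inf\{t\geq0:|X_t|\geq R\}$. Apply It\^o's formula to $|X_{t\wedge\tau_R}|^{p}$ for $p\geq2$ (or to the smoothed function $(1+|X_{t\wedge\tau_R}|^2)^{p/2}$), estimate the $\dif W$- and $\widetilde N_0$-stochastic integrals by the Burkholder--Davis--Gundy inequality, and bound the drift, diffusion and jump contributions by $\mathbf{(H_4)}$, using $\mW_2(\mu_s,\delta_0)^2=\mE|X_s|^2$. Taking suprema yields an inequality of the form $\mE\big(\sup_{s\leq t\wedge\tau_R}|X_s|^{p}\big)\leq C_T\big(1+\mE|\xi|^{p}+\int_0^t\mE\sup_{r\leq s\wedge\tau_R}|X_r|^{p}\dif s\big)$, so Gronwall's lemma gives a bound independent of $R$; letting $R\to\infty$ produces $\mE\big(\sup_{t\leq T}|X_t|^{p}\big)<\infty$ for every $p\geq2$, and the range $0<p<2$ follows by Jensen's inequality. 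The hypothesis $\mE|\xi|^{\b}<\infty$ for all $\b>0$ is precisely what keeps every such moment finite, and the same computation with $p=2$ alone also secures the a priori bound $\sup_{t\leq T}\mE|X_t|^2<\infty$ needed below.

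\emph{Pathwise uniqueness.} Let $X,Y$ solve (\ref{1-a}) for the same $\xi$ and the same noises, with laws $\mu_t,\nu_t$. Following the Yamada--Watanabe approximation scheme adapted to jumps (as in \cite{FL,BH}), take $\phi_n\uparrow|\cdot|$ with $\phi_n(0)=0$, $0\leq\phi_n'\leq1$ and $\phi_n''$ supported on a shrinking interval with $\sup_x x\,\phi_n''(x)\to0$, and apply It\^o to $\phi_n(X_t-Y_t)$. Writing $b_1(X,\mu)-b_1(Y,\nu)=[b_1(X,\mu)-b_1(Y,\mu)]+[b_1(Y,\mu)-b_1(Y,\nu)]$, the first bracket times $\phi_n'(X-Y)$ is nonpositive because $x\mapsto b_1(x,\mu)$ is non-increasing, while the second is $\leq K_1\mW_2(\mu_t,\nu_t)$ by $\mathbf{(H_1)}$; the $b_2$-term is handled by its joint Lipschitz bound. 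The diffusion contribution $\tfrac12\phi_n''(X-Y)|\s(X)-\s(Y)|^2\leq\tfrac12K_2^2\,\phi_n''(X-Y)|X-Y|^{2\a}$ tends to $0$ in expectation as $n\to\infty$ since $\a\geq1/2$ and $\s$ carries no law-dependence; the compensated $f_0$-increment and the uncompensated $f_1$-increment of $\phi_n$ are dominated using $\mathbf{(H_3)}$, the $|x-y|\,|f|$-part of the $\max$ absorbing the first-order term and the $|f|^2$-part the curvature term. Introducing $\tau_R$ to keep the stochastic integrals martingales, taking expectations, letting $n\to\infty$ and then $R\to\infty$ (via the moment bounds), one is led to a Gronwall inequality for $\mE|X_t-Y_t|$ that also carries $\int_0^t\mW_2(\mu_s,\nu_s)\dif s$ and $\int_0^t\mW_2(\mu_s,\nu_s)^2\dif s$, which one closes to conclude $X\equiv Y$, hence $\mu=\nu$. \emph{This is the main obstacle}: the H\"older continuity of $\s$ (and of $b_1$ in $x$) forces the estimate to be run at the level of the first moment $\mE|X_t-Y_t|$ through $\phi_n$, whereas the law-dependence is quantified in $\mW_2(\mu_s,\nu_s)\leq(\mE|X_s-Y_s|^2)^{1/2}$, i.e.\ in the second moment; reconciling the two — together with the uncompensated $f_1$-increment over $U_2$, where the crude bound $|\Delta f_1|$ is not strong enough and one must play the $\max\{\cdot\}$-structure of $\mathbf{(H_3)}$ against the shrinking support of $\phi_n'$ — is exactly where the a priori moment bounds and a stopping-time localisation must be exploited carefully.

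\emph{Existence and conclusion.} Mollify the continuous map $b_1(\cdot,\mu)$ and truncate all coefficients to obtain $b^{(m)},\s^{(m)},f^{(m)}$ that are globally Lipschitz in $(x,\mu)$, preserve the monotonicity of the $b_1$-part and satisfy $\mathbf{(H_4)}$ uniformly in $m$; each approximating McKean--Vlasov SDE with jumps has a unique strong solution $X^{(m)}$ by a Sznitman-type contraction. The $m$-uniform moment bounds of Step (i) give tightness of $\{\Law(X^{(m)})\}$ on the Skorokhod space $D([0,T];\mR)$; extracting a subsequential limit and identifying it through the associated martingale problem — using $\mathbf{(H_1)}$--$\mathbf{(H_3)}$ to pass to the limit in the coefficients and forcing the limiting law-flow to coincide with the law of the limit process — produces a weak solution of (\ref{1-a}). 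With pathwise uniqueness in hand the Yamada--Watanabe theorem gives the unique strong solution, (\ref{a-2}) is already contained in Step (i), and \cite{FL} transfers everything to (\ref{a-1}). Besides the uniqueness estimate, this identification of the limit on $D([0,T];\mR)$ in the presence of (possibly infinite-activity) jumps is the other technically heavy point.
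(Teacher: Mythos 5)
You take a genuinely different route from the paper, and there is one concrete gap in it that the paper's approach is specifically designed to avoid.

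\emph{Different overall architecture.} The paper does not go through weak existence plus a McKean--Vlasov Yamada--Watanabe theorem. Instead it runs a \emph{Picard iteration on the law flow}: setting $\mu^{(k)}_t=\cL(X^{(k)}_t)$ and solving
\[
\dif X^{(k)}_t=b(X^{(k)}_t,\mu^{(k-1)}_t)\dif t+\s(X^{(k)}_t)\dif W_t+\int_{U_0}f_0\,\widetilde N_0+\int_{U_2}f_1\,N_1,
\]
it proves (Lemma \ref{lem-1}) uniform $p$-moment bounds for the iterates, then shows $\mE\sup_{t\leq T}|X^{(k+1)}_t-X^{(k)}_t|^2\to0$, passes to the limit to get strong existence, and repeats the same estimate for two solutions to get uniqueness. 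This sidesteps both the tightness/martingale-problem identification on $D([0,T];\mR)$ and the distribution-dependent Yamada--Watanabe theorem you invoke; the latter is not a ``stock'' result and would need a separate argument in the jump setting. Your mollification/truncation plus tightness plus martingale-problem route is plausible in principle, but it is considerably heavier than what the paper does and leaves the identification of the limiting law-flow in $D([0,T];\mR)$ (with possibly infinite-activity small jumps) as a nontrivial task, which you acknowledge but do not carry out.

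\emph{The genuine gap in your pathwise uniqueness.} You apply the Yamada--Watanabe functions $\phi_n$ to the difference $X_t-Y_t$ \emph{itself}, producing a first-moment estimate for $\mE|X_t-Y_t|$, and then bound the law-dependence by $\mW_2(\mu_t,\nu_t)\leq(\mE|X_t-Y_t|^2)^{1/2}$. As you yourself flag, this does not close: the Gronwall inequality is run at order one while the measure-distance lives at order two, and there is no way to dominate $(\mE|Z_t|^2)^{1/2}$ by $\mE|Z_t|$. The paper's resolution is to apply the Yamada--Watanabe function to the \emph{square}, i.e.\ to work with $V_{\lambda,\varepsilon}(|Z_t|^2)$. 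Then the relevant second-order object is
\[
D^2V_{\lambda,\varepsilon}(|x|^2)=4V''_{\lambda,\varepsilon}(|x|^2)\,|x|^2+2V'_{\lambda,\varepsilon}(|x|^2),
\]
whose first piece carries the small factor $1/\ln\lambda$ (via (\ref{c-4})) and absorbs the H\"older diffusion $|\s(X)-\s(Y)|^2\leq K_2^2|Z|^{2\a}$, while the second is bounded and feeds a term of order $\mE|Z_t|^2$; the jump increments are handled at second order through $\mathbf{(H_3)}$. Everything is then an $L^2$ estimate, and $\mW_2(\mu_t,\nu_t)^2\leq\mE|Z_t|^2$ closes the Gronwall directly — precisely the reconciliation your write-up identifies as the obstacle but does not supply. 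Without some device of this kind (squaring inside $V_\varepsilon$, or replacing $\mW_2$ by $\mW_1$ and strengthening the hypotheses), Step (ii) of your plan does not go through.

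\emph{What is fine.} Your a priori moment estimate (Step i) is essentially the paper's Lemma \ref{lem-1}/Step 4 and is correct; the localisation by $\tau_R$, BDG for the $\dif W$ and $\widetilde N_0$ integrals, the use of $\mathbf{(H_4)}$, Gronwall, and the reduction $0<p<2$ by Jensen all match. The reduction from (\ref{a-1}) to (\ref{1-a}) via \cite{FL} is also exactly what the paper uses. But for the core well-posedness argument you should replace the $\phi_n(Z)$ / weak-existence scheme by the paper's law-iteration and $V_\varepsilon(|Z|^2)$ estimate, or else supply a genuinely new device for the first-moment/second-moment mismatch.
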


The second main result is the propagation of chaos property of jump-type McKean-Vlasov SDEs under the same irregular assumptions.

Suppose that the sequence of $\{\xi_i,i=1,2,\cdots,N\}$ is independent identically distributed with a common distribution $\nu$ in $\mR$ and $\{W^i,i=1,2,\cdots,N\}$ is a sequence of independent $1$-dimensional Brownian motions. And consider
the following non-interacting particle systems:
$$
\aligned
 \dif X_{t}^i=&b\left(X_{t}^i,\mu_t^i\right)\dif t+\s(X_{t}^i)\dif W_t^i+\int_{U_0}f_0(X_{t-}^i,\mu_t^i,u)\widetilde{N}_0(\dif t,\dif u)\\
 &+\int_{U_2}f_1(X_{t-}^i,\mu_t^i,u)N_1(\dif t,\dif u),\ \ X_0^i=\xi_i, \\
\endaligned
$$
where $\mu_{t}^i$ stands for the law of $X_{t}^i$, and $\{X_{\cdot}^i, i=1,2,\cdots,N\}$ is a set of independent identically distributed stochastic processes with the common distribution $\mu_{\cdot}$. By the weak uniqueness due to Theorem \ref{thm-1}, we known that $\mu_t=\mu_t^i,i=1,2,\cdots,N$. Defined $\widetilde\mu_t^N$ as the empirical distribution associated with $X_t^1,X_t^2,\cdots,X_t^N$, that is,
$$
\widetilde\mu_t^N=\frac{1}{N}\sum_{j=1}^N\d_{X_t^j}.
$$

Next, we use the  discretized $N$-interacting particle to approximate the above non-interacting particle. Namely, consider $N$-interacting particle $X^{N,i}$, satisfying
\beq\label{a-9}
\aligned
\dif X_{t}^{N,i}=&b\left(X_{t}^{N,i},\hat{\mu}_t^N\right)\dif t+\s(X_{t}^{N,i})\dif W_t^i+\int_{U_0}f_0(X_{t-}^{N,i},\hat{\mu}_t^N,u)\widetilde{N}_0(\dif t,\dif u)\\
 &+\int_{U_2}f_1(X_{t-}^{N,i},\hat{\mu}_t^N,u)N_1(\dif t,\dif u),\ \ X_0^{N,i}=\xi_i,\\
\endaligned
\deq
where $\hat{\mu}_t^N$ means the empirical distribution corresponding to $X_t^{N,1},X_t^{N,2},\cdots,X_t^{N,N}$, i.e.,
$$
\hat{\mu}_t^N=\frac{1}{N}\sum_{j=1}^N\d_{X_t^{N,j}}.
$$

More precisely, the main statement is as following:
\begin{thm}\label{thm-2}Suppose that Assumptions $\mathbf{(H_1)}$--$\mathbf{(H_4)}$ hold. Then for any $T>0$ and for $p=1,2$ such that
\beq\label{1-4}
\lim_{N\rightarrow\infty}\sup_{i=1,2,\cdots,N}\mE\left[\sup_{0\leq t\leq T}\left|X_t^{N,i}-X_t^i\right|^p\right]=0.
\deq
\end{thm}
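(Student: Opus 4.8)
The plan is to combine Yamada--Watanabe's approximation with a stopping-time localization and a Gronwall loop, exploiting exchangeability of the interacting system to reduce $\sup_i$ to a single particle. First I would record uniform moment estimates: running the argument behind Theorem \ref{thm-1} with the linear-growth hypothesis $\mathbf{(H_4)}$ gives, for every $q>0$, $\sup_N\sup_{1\le i\le N}\mE\big(\sup_{t\in[0,T]}|X^{N,i}_t|^q\big)<\infty$, while (\ref{a-2}) controls the $X^i$. With these bounds in hand I introduce, for $R>0$, the stopping time $\t_R=\inf\{t\ge0:\max_{1\le j\le N}(|X^{N,j}_t|\vee|X^j_t|)\ge R\}$, which escapes to $T$ fast enough (uniformly in $N$) that every localized estimate passes to the limit $R\to\infty$.

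Write $Z^{N,i}_t=X^{N,i}_t-X^i_t$ and let $\phi_\eps$ be a Yamada--Watanabe function (smooth, even, $\phi_\eps(x)\uparrow|x|$, $0\le\phi_\eps'\le1$, with $x\phi_\eps''(x)\le C(\log\eps^{-1})^{-1}$ near $0$; the restriction $\a\ge1/2$ in $\mathbf{(H_2)}$ is precisely what makes $\phi_\eps''(x)|x|^{2\a}$ negligible). Applying It\^o's formula for jump processes to $\phi_\eps(Z^{N,i}_{t\wedge\t_R})$ yields: a drift term $\phi_\eps'(Z)[b(X^{N,i},\hat\mu^N)-b(X^i,\mu_\cdot)]$ handled via $b=b_1+b_2$, using $\phi_\eps'(Z)[b_1(X^{N,i},\hat\mu^N)-b_1(X^i,\hat\mu^N)]\le0$ (monotonicity of $x\mapsto b_1(x,\mu)$) together with the $\mW_2$-Lipschitz bound in the measure, and the full Lipschitz bound for $b_2$; a diffusion martingale vanishing in expectation after localization; the It\^o correction $\tfrac12\phi_\eps''(Z)|\s(X^{N,i})-\s(X^i)|^2\le\tfrac12 K_2^2\phi_\eps''(Z)|Z|^{2\a}=O((\log\eps^{-1})^{-1})$; and jump terms, where Taylor's expansion plus the local shape of $\phi_\eps$ bound the $U_0$-contribution by a multiple of $\int_{U_0}\min\{|Z||\Delta f_0|,|\Delta f_0|^2\}\nu_0(\dif u)$ --- exactly the form controlled in $\mathbf{(H_3)}$, hence by $CK_3(|Z|^2+\mW_2(\hat\mu^N,\mu)^2)$ --- and the $U_2$-contribution is treated likewise, the absolutely integrable (finite-variation) structure guaranteed by the third bound in $\mathbf{(H_4)}$, after the reduction of \cite{FL} has allowed us to work with the $U_2$-truncated equation (\ref{1-a}).

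Taking expectations, using exchangeability of $(X^{N,i},X^i)_i$ so that $\mE\phi_\eps(Z^{N,i}_{t\wedge\t_R})$ does not depend on $i$, and bounding the measure term by
$$
\mW_2(\hat\mu^N_t,\mu_t)^2\le2\,\mW_2(\hat\mu^N_t,\widetilde\mu^N_t)^2+2\,\mW_2(\widetilde\mu^N_t,\mu_t)^2\le\frac{2}{N}\sum_{j=1}^N|Z^{N,j}_t|^2+2\,\mW_2(\widetilde\mu^N_t,\mu_t)^2,
$$
I average over $i=1,\dots,N$, which turns $\tfrac1N\sum_j\mE|Z^{N,j}|^2$ back into $\mE|Z^{N,1}|^2$ and produces a closed inequality $g_R(t)\le C\int_0^tg_R(s)\,\dif s+C\eta_{N,\eps}$ for $g_R(t)=\mE|Z^{N,1}_{t\wedge\t_R}|$, where $\eta_{N,\eps}=(\log\eps^{-1})^{-1}+\sup_{s\le T}\mE\,\mW_2(\widetilde\mu^N_s,\mu_s)^2+(\text{localization error})$; here the empirical-measure term tends to $0$ as $N\to\infty$ by the law of large numbers in $\mW_2$, licit because of the moments from Theorem \ref{thm-1}. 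First letting $\eps\to0$ (using $\phi_\eps(x)\to|x|$ and uniform integrability) and then applying Gronwall gives $\sup_i\mE|X^{N,i}_{t\wedge\t_R}-X^i_{t\wedge\t_R}|\le C_T\eta_N\to0$; upgrading to $\sup_{t\le T}$ by Burkholder--Davis--Gundy on the martingale pieces and letting $R\to\infty$ yields (\ref{1-4}) for $p=1$. For $p=2$ one repeats the scheme with $\phi_\eps^2$ in place of $\phi_\eps$ (or interpolates the $p=1$ bound against the uniform $L^q$ estimates), with the same monotonicity/H\"older/jump bookkeeping.

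The main obstacle is closing the jump terms and the H\"older diffusion term inside a single Gronwall loop: forcing the $\phi_\eps''$-weighted diffusion and jump contributions to be $o(1)$ in $\eps$ uniformly in $N$, controlling the $f_1$-integral against the uncompensated $N_1$ on $U_2$ (the reason (\ref{1-a}) and \cite{FL} enter), and keeping the Wasserstein feedback with the sharp constant $2/N$ so that averaging over the particles really closes the estimate rather than leaving an uncontrolled multiple of $g_R$.
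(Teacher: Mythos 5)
Your proposal is essentially the paper's own argument: apply a Yamada--Watanabe function to $Z^{N,i}_t=X^{N,i}_t-X^i_t$, use monotonicity of $b_1$ to discard the non-Lipschitz drift part, absorb the H\"older diffusion and jump corrections via $\mathbf{(H_2)}$--$\mathbf{(H_3)}$ with the $\phi''_\eps$-weight going to zero, decompose $\mW_2(\hat\mu^N,\mu)$ through $\widetilde\mu^N$ by the triangle inequality, use exchangeability of $(Z^{N,j})_j$ to turn $\tfrac1N\sum_j$ into a single term, control the empirical-measure error $\mW_2(\widetilde\mu^N,\mu)$ via the Fournier--Guillin estimate (Lemma~2.2), and close with Gr\"onwall, BDG, $\eps\to0$, and removal of the stopping time. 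The paper is only more explicit about the quantitative rate $CN^{-1/2}$ from Lemma~2.2 (which it reuses for Theorem~1.4), whereas you invoke the law of large numbers in $\mW_2$ qualitatively; for the statement (\ref{1-4}) alone that is sufficient.

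One small but real bookkeeping slip: you write the squared bound
$$
\mW_2(\hat\mu^N_t,\mu_t)^2\le\frac{2}{N}\sum_{j=1}^N|Z^{N,j}_t|^2+2\,\mW_2(\widetilde\mu^N_t,\mu_t)^2
$$
and then assert that averaging over $i$ closes a Gr\"onwall inequality for $g_R(t)=\mE\big|Z^{N,1}_{t\wedge\t_R}\big|$. But the averaged term $\tfrac1N\sum_j\mE|Z^{N,j}|^2=\mE|Z^{N,1}|^2$ is an $L^2$ quantity and does not reproduce $g_R$; a first-order Gr\"onwall in $\mE|Z^{N,1}|$ cannot close with $\mE|Z^{N,1}|^2$ on the right unless you already control the latter. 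For $p=1$ one must instead use the unsquared decomposition
$$
\mW_2(\hat\mu^N_t,\mu_t)\le\frac{1}{N}\sum_{j=1}^N|Z^{N,j}_t|+\mW_2(\widetilde\mu^N_t,\mu_t),
$$
which the $\mathbf{(H_1)}$-coupling of $b$ actually produces to first power; the paper records both the linear and the squared decompositions in its display (4.3) precisely so that the $p=1$ and $p=2$ loops close separately. Replacing your squared bound by the linear one in the $p=1$ branch fixes the gap, and the rest of the argument matches the paper's.
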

\begin{rem}In this theorem, we extend the result of Lemma 3.2 in \cite{BH} to jump-type McKean-Vlasov SDEs. But unfortunately we didn't get the similar case for any $p>0$.
\end{rem}
Another goal of this paper is to get the corresponding overall convergence rate. To discretize (\ref{a-9}) in time, for fixed $h\in(0,1)$, the corresponding Euler's scheme is
$$
\aligned
\dif X_{t}^{h,N,i}=&b\left(X_{t_h}^{h,N,i},\hat{\mu}_{t_h}^{h,N}\right)\dif t+\s\left(X_{t_h}^{h,N,i}\right)\dif W_t^i+\int_{U_0}f_0(X_{{t_h}}^{h,N,i},\hat{\mu}_{t_h}^{h,N},u)\widetilde{N}_0(\dif t,\dif u)\\
 &+\int_{U_2}f_1(X_{{t_h}}^{h,N,i},\hat{\mu}_{t_h}^{h,N},u)N_1(\dif t,\dif u),\ \ X_0^{h,N,i}=\xi_i.\\
\endaligned
 $$
where $t_h:=[t/h]h$ and $\hat{\mu}_{t_h}^{h,N,i}=\frac{1}{N}\sum_{j=1}^N\d_{X_{t_h}^{h,N,i}}$.

\begin{thm}\label{thm-3}Under the assumptions of Theorem \ref{thm-1}, for $h\in(0,1)$ sufficiently small, for any $\a\in[\frac{1}{2},1]$, there exists the constant $C$ dependent of $T$ such that
$$
\sup_{i=1,2,\cdots,N}\mE\left[\sup_{0\leq t\leq T}\left|X_t^{h,N,i}-X_t^i\right|\right]<C
\left\{
\begin{aligned}
&N^{-\frac{1}{4}}+\left(\ln\frac{1}{h}\right)^{-1/2},\ \ \a=\frac{1}{2};\\
&N^{-\frac{2\a-1}{2}}+h^{\frac{(2\a-1)^2}{2}}+h^{\frac{\b(2\a-1)}{2}},\ \ \a\in\left(
\frac{1}{2},1\right]. \\
\end{aligned}
\right.
$$
$$
\sup_{i=1,2,\cdots,N}\mE\left[\sup_{0\leq t\leq T}\left|X_t^{h,N,i}-X_t^i\right|^2\right]<C
\left\{
\begin{aligned}
&N^{-\frac{1}{2}}+\left(\ln\frac{1}{h}\right)^{-1},\ \ \a=\frac{1}{2};\\
&N^{-(\a-1)}+h^{\frac{2\a-1}{2}}+h^{\frac{\b}{2}},\ \ \a\in\left(
\frac{1}{2},1\right]. \\
\end{aligned}
\right.
$$
\end{thm}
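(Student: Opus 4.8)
The plan is to combine two error sources — the particle approximation error (comparing $X^{h,N,i}$ to $X^i$) and the time-discretization (Euler) error — via a triangle inequality and a Gronwall-type argument applied to the Yamada--Watanabe approximating functions $\phi_\delta$. More precisely, I would write
$$
\mE\Big[\sup_{0\le t\le T}|X_t^{h,N,i}-X_t^i|\Big]
\le \mE\Big[\sup_{0\le t\le T}|X_t^{h,N,i}-X_t^{N,i}|\Big]
+\mE\Big[\sup_{0\le t\le T}|X_t^{N,i}-X_t^i|\Big],
$$
where the second term is already controlled by Theorem \ref{thm-2}, and under the quantitative Wasserstein rate for i.i.d. empirical measures (Fournier--Guillin) contributes the $N^{-1/4}$ (resp. $N^{-(2\a-1)/2}$) pieces — this is where the $p=1,2$ restriction and the distinction $\a=1/2$ vs $\a>1/2$ originate. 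So the real work is the first term: bounding the Euler error $\mE[\sup_t|X_t^{h,N,i}-X_t^{N,i}|^p]$ for $p=1,2$.

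For the Euler error I would apply Itô's formula to $\phi_\delta(X_t^{h,N,i}-X_t^{N,i})$, where $\phi_\delta$ is the standard Yamada--Watanabe function (with $\phi_\delta(x)\approx|x|$ and $\phi_\delta''(x)\lesssim \tfrac{1}{x\ln(1/\delta)}\mathbf 1_{[\delta,\delta^{1/?}]}$ adapted to the exponent $\alpha$). The drift difference splits as $b(X_{t_h}^{h,N,i},\hat\mu_{t_h}^{h,N})-b(X_t^{N,i},\hat\mu_t^N)$; I add and subtract $b(X_t^{h,N,i},\hat\mu_t^{h,N})$ so that $(\mathbf{H_1})$ gives a term $\lesssim |X_t^{h,N,i}-X_t^{N,i}|^\beta + \mW_2(\hat\mu_t^{h,N},\hat\mu_t^N)$ plus a "freezing" term measuring $|X_{t_h}^{h,N,i}-X_t^{h,N,i}|$ and $\mW_2(\hat\mu_{t_h}^{h,N},\hat\mu_t^{h,N})$. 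The diffusion term, via $(\mathbf{H_2})$ and the defining property $\phi_\delta''(x)x^{2\alpha}\lesssim \delta^{2\alpha-1}+ x^{2\alpha-1}$ (or the $\ln(1/\delta)$ bound when $\alpha=1/2$), yields the $h^{(2\alpha-1)^2/2}$ type contributions after optimizing $\delta$ in terms of $h$; the jump terms are handled by $(\mathbf{H_3})$ together with Kunita's inequality / the compensator estimate, contributing similarly. Using the linear growth $(\mathbf{H_4})$ and Theorem \ref{thm-1} I would first establish the a priori one-step bound $\mE|X_{t_h}^{h,N,i}-X_t^{h,N,i}|^p\lesssim h^{p/2}$ (for the Brownian part) and $\lesssim h$ (for the finite-activity jump part), which feeds the freezing terms.

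Collecting everything and taking $\sup$ then $\mE$, the Burkholder--Davis--Gundy inequality converts the martingale parts into $\mE[\sup_{s\le t}|\cdot|^p]$ with a small multiplicative constant, and one arrives at
$$
\mE\Big[\sup_{s\le t}|X_s^{h,N,i}-X_s^{N,i}|^p\Big]
\lesssim \big(\text{terms in }h,\delta\big)+\int_0^t \mE\Big[\sup_{r\le s}|X_r^{h,N,i}-X_r^{N,i}|^p\Big]\,\dif s,
$$
so Gronwall closes the estimate with a constant $C=C(T)$; then optimize $\delta=\delta(h)$ (choosing $\delta$ a suitable power of $h$ when $\alpha>1/2$, and $\delta=h$ when $\alpha=1/2$, which is what produces the $(\ln(1/h))^{-1/2}$ and $(\ln(1/h))^{-1}$ rates). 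The $\beta$-Hölder drift contributes the $h^{\beta(2\alpha-1)/2}$ and $h^{\beta/2}$ terms after the same optimization. I expect the main obstacle to be bookkeeping the interplay between the Hölder exponent $\alpha$ in $\sigma$, the exponent in the $\phi_\delta''$ bound, and the freezing-error power of $h$: getting the exponents $(2\alpha-1)^2/2$ versus $(2\alpha-1)/2$ in the two moment cases right requires carefully tracking whether $|X^{h,N,i}-X^{N,i}|$ appears to power one or two inside $\phi_\delta''$, and handling the regime where this difference is below the threshold $\delta$ separately from where it exceeds it. The jump terms under only the "max" integrability in $(\mathbf{H_3})$ (rather than Lipschitz) also need the same Yamada--Watanabe truncation applied to the compensated integral, which is the least routine part of the jump estimates.
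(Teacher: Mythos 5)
Your proposal is correct and follows essentially the same route as the paper: a triangle inequality splitting $|X_t^{h,N,i}-X_t^i|\le |X_t^{h,N,i}-X_t^{N,i}|+|X_t^{N,i}-X_t^i|$, with the second (propagation-of-chaos) term controlled by Theorem~\ref{thm-2} and the first (Euler time-discretization) term handled in Lemma~\ref{lem-3} via It\^o's formula applied to the Yamada--Watanabe function $V_{\lambda,\varepsilon}$, one-step freezing-error bounds, and Gronwall's inequality. Your outline even flags the same key bookkeeping points (the $\alpha=1/2$ versus $\alpha>1/2$ split, the optimization of the Yamada--Watanabe parameter against $h$, and the treatment of the jump compensator under $(\mathbf{H_3})$) that the paper defers to the analogue of Lemma~3.4 in \cite{BH}.
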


This paper is organized as follows. In Section 2, we introduce the main lemmas for later use. Section 3 yields the strong wellposedness of (\ref{a-1}). In section 4 and 5, we construct the propagation of chaos and the corresponding overall convergence rate, respectively.

Throughout the paper, $C$ with or without
indices will denote different positive constants (depending on the indices), whose values may change from one place to another and not important.
\section{Preliminary}

Firstly, in order to obtain our main results, we also need the following significant
 lemma.
\begin{lem}\label{b-2} \cite{OS}Suppose that $X(t)\in\mR$ is an {\it It\^o-L\'evy process} of the following form:
$$
\dif X(t)=b(t,\om)\dif t+\s(t,\om)\dif W_t+\int_{\mR}\gamma(t,u,\om)\overline{N}(\dif t,\dif u),
$$
where
$$
\overline{N}(x)=
\left\{
\begin{aligned}
&N(\dif t,\dif u)-\nu(\dif u)\dif t\ \ \mathrm{if}\ \ |u|<R; \\
&N(\dif t,\dif u),\ \ \mathrm{if}\ \  |u|\geq R, \\
\end{aligned}
\right.
$$
for some $R\in[0,\infty).$ Let $f\in C^{1,2}([0,T]\times\mR;\mR)$ and define $Y(t)=f(t,X(t))$. Then $Y(t)$ is again an It\^o-L\'evy process and
$$
\aligned
Y(t)=&\frac{\partial f}{\partial t}(t,X(t))\dif t+\frac{\partial f}{\partial x}(t,X(t))[b(t,\om)\dif t+\s(t,\om)\dif W_t]+\frac{1}{2}\s^2(t,X(t))\frac{\partial^f}{\partial x^2}(t,X(t-)) \\
&+\int_{|u|<\mR}\left\{f(t,X(t-)+\gamma(t,u))-f(t,X(t-))-\frac{\partial f}{\partial x}(t,X(t-))\gamma(t,u)\right\}\nu(\dif u)\dif t\\
&+\int_{\mR}\left\{f(t,X(t-)+\gamma(t,u))-f(t,X(t-))\right\}\overline{N}(\dif t,\dif u).\\
\endaligned
$$
\end{lem}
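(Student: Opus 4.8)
This is the It\^o formula for It\^o--L\'evy processes, so the plan is to deduce it from the classical change-of-variables formula for semimartingales with jumps, organising the computation around the continuous/purely discontinuous decomposition of $X$. First I would write
$$
X(t)=X(0)+\int_0^t b(s,\om)\,\dif s+\int_0^t\s(s,\om)\,\dif W_s+\int_0^t\!\!\int_{|u|<R}\g(s,u)\,\overline N(\dif s,\dif u)+\int_0^t\!\!\int_{|u|\geq R}\g(s,u)\,N(\dif s,\dif u),
$$
with $\overline N=N-\nu\,\dif t$ on $\{|u|<R\}$. Since $\int_0^t\!\int_{|u|<R}\g\,\overline N=\int_0^t\!\int_{|u|<R}\g\,N-\int_0^t\!\int_{|u|<R}\g\,\nu(\dif u)\,\dif s$, the continuous part of $X$ equals $X^c(t)=\int_0^t b\,\dif s+\int_0^t\s\,\dif W_s-\int_0^t\!\int_{|u|<R}\g(s,u)\,\nu(\dif u)\,\dif s$, with $\dif\langle X^c\rangle_s=\s^2(s,\om)\,\dif s$, while at an atom $(s,u)$ of the Poisson measure one has $\Delta X(s)=\g(s,u)$.

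The steps would then be: \emph{(i) Localization.} Stopping at the $n$-th jump with $|u|\geq R$ and at the exit of $X$ from $[-n,n]$, one may assume that on $[0,T]$ there are finitely many ``large'' jumps, $X$ and the coefficients are bounded, and $\int_{|u|<R}|\g(s,u)|^2\nu(\dif u)$ is bounded; the general case follows by $n\to\infty$. For infinite-activity small jumps one moreover replaces $\{|u|<R\}$ by $\{\eps\leq|u|<R\}$, proves the identity for the resulting finite-activity process, and lets $\eps\downarrow0$. \emph{(ii) Between jumps.} On each random interval between consecutive jumps of $X$, the process is a continuous It\^o process, so the ordinary It\^o formula gives $\dif f(t,X(t))=\partial_t f\,\dif t+\partial_x f\,\dif X^c(t)+\tfrac12\s^2\partial_{xx}f\,\dif t$, where $\dif X^c$ contains the compensator drift $-\int_{|u|<R}\g(s,u)\,\nu(\dif u)\,\dif t$. \emph{(iii) Jump increments.} Adding over the jump times the increments $f(s,X(s-)+\g(s,u))-f(s,X(s-))$, rewritten as $\int_0^t\!\int_{\mR}[f(s,X(s-)+\g)-f(s,X(s-))]\,N(\dif s,\dif u)$. \emph{(iv) Recombination.} Split this $N$-integral over $\{|u|<R\}$ and $\{|u|\geq R\}$ and replace $N$ by $\overline N+\nu\,\dif s$ on $\{|u|<R\}$. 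The $\nu\,\dif s$-piece combines with the term $-\partial_x f\int_{|u|<R}\g\,\nu(\dif u)\,\dif t$ from (ii) into $\int_0^t\!\int_{|u|<R}[f(s,X(s-)+\g)-f(s,X(s-))-\partial_x f\,\g]\,\nu(\dif u)\,\dif s$, while the two $\overline N$-integrals recombine (using $\overline N=N$ on $\{|u|\geq R\}$) into $\int_0^t\!\int_{\mR}[f(s,X(s-)+\g)-f(s,X(s-))]\,\overline N(\dif s,\dif u)$. Together with $\partial_t f\,\dif t$, $\partial_x f\,b\,\dif t$, $\partial_x f\,\s\,\dif W_t$ and $\tfrac12\s^2\partial_{xx}f\,\dif t$ this is precisely the asserted formula.

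The main obstacle is the limiting argument in (i) for infinite-activity small jumps: one must verify that every compensated integral above is well defined and that the truncated versions converge. This uses the $C^{1,2}$-regularity of $f$ and the local bounds from localization, together with the second-order Taylor estimate $|f(s,x+\g)-f(s,x)-\partial_x f(s,x)\g|\leq C|\g|^2$, so that finiteness of $\int_{|u|<R}|\g(s,u)|^2\nu(\dif u)$ controls the $\nu\,\dif s$-compensator, and It\^o's isometry (or the Burkholder--Davis--Gundy inequality) together with dominated convergence handles the $\overline N$-martingale integrals as $\eps\downarrow0$. Alternatively, one may simply invoke the general semimartingale It\^o formula and identify the predictable compensators of $X$ and of its jump measure with the expressions above.
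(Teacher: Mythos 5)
The paper does not prove this lemma; it simply cites it from \O ksendal and Sulem \cite{OS}, where it appears as the one-dimensional It\^o formula for It\^o--L\'evy processes. Your proof sketch is correct and follows the standard textbook route for this result: localize to make the large-jump part finite-activity and the process bounded, further truncate the small-jump set to $\{\eps\leq|u|<R\}$ so the driving process has finitely many jumps, apply the classical It\^o formula to the continuous part $X^c$ (correctly including the compensator drift $-\int_{|u|<R}\g\,\nu(\dif u)\,\dif s$) between consecutive jumps, add the jump increments via the counting measure $N$, recombine by writing $N=\overline N+\nu\,\dif s$ on $\{|u|<R\}$, and pass to the limit $\eps\downarrow0$ using the second-order Taylor bound $|f(s,x+\g)-f(s,x)-\partial_x f(s,x)\g|\leq C|\g|^2$ together with It\^o isometry or BDG for the compensated martingale integrals. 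This is essentially the argument in \cite{OS} (and in any standard treatment of It\^o's formula for jump diffusions), so there is nothing to contrast with the paper's own proof---the paper has none---and your alternative remark, that one may instead quote the general semimartingale change-of-variables formula and identify the characteristics, is also a legitimate shortcut. The only cosmetic point worth flagging is that the displayed formula in the lemma should read $\dif Y(t)=\cdots$ (and has the obvious typos $\partial^2 f/\partial x^2$ and $|u|<R$), but that is an issue with the statement as typeset, not with your proof.
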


Next, we introduce a well-studied metric on the space of
distributions known as the Wasserstein distance which allows us to consider $\cP(\mR)$ as a metric space.

For $\mu,\nu\in\cP(\mR^d)(d\geq 1)$ and $p>0$, the $\mW_p$-Wasserstein distance between $\mu$ and $\nu$ is defined by
$$
\mW_p(\mu,\nu)=\inf_{\pi\in\cC(\mu,\nu)}\left(\int_{\mR^d\times \mR^d}|x-y|^p\pi(\dif x,\dif y)\right)^{\frac{1}{1\vee p}},
$$
where $\cC(\mu,\nu)$ is the set of all couplings of $\mu$ and $\nu$ on $\cP(\mR^d\times \mR^d)$, such that $\pi\in\cC(\mu,\nu)$ if and only if $\pi(\cdot,\mR^d)=\mu(\cdot)$ and $\pi(\mR^d,\cdot)=\nu(\cdot)$.

The topology induced by Wasserstein metric coincides with the topology of weak convergence of measure and
the convergence of all moments of order up to $p$ in \cite{D}.

Nevertheless, in the process of study for the propagation of chaos, it is primarily concerned with Wasserstein distance between $\mu^{i}_s$ and $\widetilde{\mu}_s^{N}$. In Corollary 4.1 of \cite{FG}, the author implied that
\begin{lem}\label{b-1}\cite{FG} Let $\mu\in\cP(\mR^d)(d\geq 1)$. Assume that $\int_{\mR^d}|x|^q\mu(\dif x)<\infty$ for any $p,q>0$ and for some $q>p$. There exists a constant $C$ depending only on $p,d,p$ such that, for any $N\geq 1$,
$$
\mE\left[\mW_p(\mu^{i}_s,\widetilde{\mu}_s^{N})^p\right]\leq C\mE[|X_s^i|^q]^{p/q}  \left\{
\begin{aligned}
&N^{-1/2}\leq CN^{-d/2},\ \ \mathrm{if}\ \  p>\frac{d}{2},q\neq 2p; \\
&N^{-1/2}\log(1+N)\leq CN^{-d/2}\log(1+N),\ \ \mathrm{if}\ \  p=\frac{d}{2},q\neq 2p;  \\
&N^{-p}\leq CN^{-p},\ \ \mathrm{if}\ \ p\in\left(0,\frac{d}{2}\right),q\neq \frac{d}{1-p}.\\
\end{aligned}
\right.
$$
\end{lem}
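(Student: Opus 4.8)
The statement is Fournier and Guillin's quantitative bound on the Wasserstein distance between a probability measure and its empirical counterpart, so the plan is to reproduce their dyadic-partition argument. Write $\mu=\mu_s^i$ and $\widetilde\mu^N=\widetilde\mu_s^N=\frac1N\sum_{j=1}^N\d_{X_s^j}$, where $X_s^1,\dots,X_s^N$ are i.i.d.\ of law $\mu$. First I would fix a bounded cube $C$ of side $L$ and, for each $n\geq0$, let $\sQ_n$ be the partition of $C$ into $2^{nd}$ subcubes of side $L2^{-n}$, these partitions being nested. The deterministic ingredient is a transport inequality of essentially the form
$$
\mW_p(\mu|_C,\nu|_C)^p\leq C_{d,p}\,L^p\sum_{n\geq0}2^{-np}\sum_{Q\in\sQ_n}\big|\mu(Q)-\nu(Q)\big|,
$$
obtained by building a coupling hierarchically down the dyadic tree: once the cube masses at level $n-1$ have been matched, the mass still to be moved between siblings at level $n$ is controlled by the level-$n$ discrepancy $\sum_{Q\in\sQ_n}|\mu(Q)-\nu(Q)|$ and travels a distance $\leq\sqrt d\,L2^{-n}$, and raising to the $p$-th power and summing yields the display.

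Next I would set $\nu=\widetilde\mu^N$ and take expectations. For each fixed cube $Q$ the variable $N\widetilde\mu^N(Q)$ is $\mathrm{Binomial}(N,\mu(Q))$, so $\mE|\widetilde\mu^N(Q)-\mu(Q)|\leq(\mu(Q)/N)^{1/2}$, while trivially $\mE|\widetilde\mu^N(Q)-\mu(Q)|\leq 2\mu(Q)$; summing over the $2^{nd}$ cubes of $\sQ_n$ via Cauchy--Schwarz gives
$$
\mE\sum_{Q\in\sQ_n}\big|\widetilde\mu^N(Q)-\mu(Q)\big|\ \leq\ \mu(C)\wedge\big(\mu(C)^{1/2}\,2^{nd/2}N^{-1/2}\big)\ \leq\ 1\wedge\big(2^{nd/2}N^{-1/2}\big).
$$
Inserting this into the transport inequality and summing the series $\sum_{n\geq0}2^{-np}\big(1\wedge 2^{nd/2}N^{-1/2}\big)$, split at the crossover level $2^n\sim N^{1/d}$, yields the three regimes $N^{-1/2}$ for $p>d/2$, $N^{-1/2}\log(1+N)$ for $p=d/2$, and $N^{-p/d}$ for $p<d/2$; taking $d=1$, as is needed here, reproduces exactly the trichotomy in the statement, and the borderline logarithmic corrections at $q=2p$ (resp.\ $q=d/(1-p)$) enter at the same place. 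To drop the boundedness of $C$, I would run the standard localization: split $\mR^d$ into the ball $\{|x|<1\}$ and the annuli $\{2^{k-1}\leq|x|<2^k\}$, apply the bounded estimate on each piece at its own scale $2^k$, use $\mu(\{|x|\geq 2^{k-1}\})\leq 2^{-(k-1)q}\mE|X_s^i|^q$ from Markov's inequality, and sum over $k$; this produces the prefactor $\mE[|X_s^i|^q]^{p/q}$ together with a second, dominated, tail term of order $N^{-(q-p)/q}$.

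The step I expect to cost the most work is this last localization: one must balance how finely each annulus $\{2^{k-1}\leq|x|<2^k\}$ is resolved against how far out $k$ is carried, trading the cost factor $2^{kp}$ and the cube count $2^{kd}$ against the mass decay $2^{-kq}$, so that all contributions telescope into a single clean bound $C\,\mE[|X_s^i|^q]^{p/q}\times(\text{rate})$; it is precisely the comparison of these exponents that forces the exclusions $q\neq 2p$ and $q\neq d/(1-p)$. Making the hierarchical transport inequality of the first paragraph rigorous is also somewhat delicate, but it follows a routine dyadic-matching recursion. Finally, since only $d=1$ is used below, one could alternatively---at least for $p\geq1$---bypass the dyadic tree via the one-dimensional identity $\mW_p(\mu,\nu)^p=\int_0^1|F_\mu^{-1}(t)-F_\nu^{-1}(t)|^p\dif t$ combined with a Dvoretzky--Kiefer--Wolfowitz-type bound on $\|F_{\widetilde\mu^N}-F_\mu\|$, but the dyadic route is what covers the full range $p>0$ claimed in the lemma.
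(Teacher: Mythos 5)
Your plan is essentially the proof of the result the paper merely cites: the paper gives no argument of its own for this lemma (it is quoted from Corollary~4.1/Theorem~1 of Fournier--Guillin \cite{FG}), and your dyadic-partition transport inequality, the binomial/Cauchy--Schwarz bound $\min\bigl(1,2^{nd/2}N^{-1/2}\bigr)$ on the level-$n$ discrepancy, the crossover summation producing the three regimes, and the annulus/Markov truncation yielding the prefactor $\mE[|X_s^i|^q]^{p/q}$ and the extra $N^{-(q-p)/q}$ term reproduce exactly the argument of that reference, including the correct identification of where the exclusions $q\neq 2p$ and $q\neq d/(d-p)$ enter. So the proposal is correct and follows the same route as the source on which the paper relies.
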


\section{Proof of Theorem \ref{thm-1}}

Firstly, in the proof process of topic, with the help of the Yamada-Watanabe approximation approach, we list the relevant assumptions, as following: for $\l>1$ and $\varepsilon\in(0,1)$, there is a continuous function $\phi_{\l,\varepsilon}:=\mR_+\rightarrow\mR_+$ with the support $[\varepsilon/\l,\varepsilon]$ so that
$$
0\leq \phi_{\l,\varepsilon}(x)\leq \frac{2}{x\ln\l},\ \ x>0,\ \ \int_{\eps/\l}^{\eps}\phi_{\l,\eps}(r)\dif r=1.
$$
We define the following mapping
\beq\label{c-2}
\mR\ni x\rightarrow V_{\l,\eps}(x):=\int_0^{|x|}\int_0^y\phi_{\l,\eps}(z)\dif z\dif y,
\deq
which is twice differentiable and satisfies
\beq\label{c-3}
|x|-\eps\leq V_{\l,\eps}(x)\leq |x|,\ \  \sgn(x)V'_{\l,\eps}(x)\in[0,1],\ \  x\in\mR,
\deq
 and
\beq\label{c-4}
0\leq V''_{\l,\eps}(x)\leq \frac{2}{|x|\ln\l}\mathbf{1}_{[\varepsilon/\l,\varepsilon]}(|x|),\ \  x\in\mR.
\deq
In (\ref{c-3}) and (\ref{c-4}), $\sgn(\cdot)$ means the sign function, and $V'_{\l,\eps}$ and $V''_{\l,\eps}$ are treated as respectively the first and second-order derivative of $V_{\l,\eps}.$ In the course of the following research, we take $\l=e^{\frac{1}{\varepsilon}}$ in (\ref{c-2}), that is $V_{\varepsilon}:=V_{e^{\frac{1}{\varepsilon}},\varepsilon}$.

Secondly, we introduce the distribution-iterated scheme of (\ref{a-1}): for any $k\in \mN_+$,
\beq\label{c-6}
\aligned
\dif X_t^{(k)}=&b(X_t^{(k)},\mu_t^{(k-1)})\dif t+\s(X_t^{(k)})\dif W_t+\int_{U_0}f_0(X_{t-}^{(k)},\mu_t^{(k-1)},u)\widetilde{N}_0(\dif t,\dif u)\\&+\int_{U_2}f_1(X_{t-}^{(k)},\mu_t^{(k-1)},u)N_1(\dif t,\dif u)
\endaligned
\deq
where $\mu_t^{(k)}=\cL_{X_t^{(k)}}$.

Next, we need to prove that the $p$--order moment ($p>0$) is uniformly bounded in a finite time interval.

\begin{lem}\label{lem-1}Under Assumptions $\mathbf{(H_1)}$--$\mathbf{(H_4)}$, for any $k\in \mN_+$, $p> 0$ and $\mE|\xi|^p<\infty$, there exists the constant $C$ only dependent on $T$ and $p$. The above distribution-iterated SDE (\ref{c-6}) satisfy the following relationships:
\beq\label{c-1}
\mE\left(\sup_{t\in[0,T]}|X_t^{(k)}|^p\right)<\infty.
\deq
\end{lem}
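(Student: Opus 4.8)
The plan is to prove the uniform $p$-th moment bound \eqref{c-1} by induction on $k$, treating the iterated scheme \eqref{c-6} as a (classical, law-frozen) jump SDE at each step, so that the nonlinear dependence on $\mu_t^{(k-1)}$ enters only through the already-controlled quantity $\mW_2(\mu_t^{(k-1)},\delta_0)^2 = \mE|X_t^{(k-1)}|^2$. For $k=0$ (or the base case of the iteration) the process is driven by fixed coefficients and the bound is classical; for the inductive step I would assume $\sup_{t\in[0,T]}\mE|X_t^{(k-1)}|^2<\infty$ (which follows from the case $p=2$ of the induction hypothesis, after first disposing of that particular exponent) and derive \eqref{c-1} for $X^{(k)}$. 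The natural device is a stopping-time truncation: set $\tau_R^{(k)}=\inf\{t\ge 0:|X_t^{(k)}|\ge R\}$, prove the estimate for $X^{(k)}_{t\wedge\tau_R^{(k)}}$ with a constant independent of $R$, and let $R\to\infty$ via Fatou.

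The key computational step is to apply the It\^o--L\'evy formula of Lemma \ref{b-2} to $|X_t^{(k)}|^p$ (or, for the delicate small-$p$ range, to the smoothed function $(\epsilon+|x|^2)^{p/2}$, letting $\epsilon\downarrow 0$ at the end). After localization one takes expectations, so the $\dif W$ and $\widetilde N_0$ martingale terms vanish; for the supremum inside the expectation one invokes the Burkholder--Davis--Gundy inequality on the Brownian and compensated-Poisson martingale parts and a Gronwall-type argument. The linear growth hypothesis $\mathbf{(H_4)}$ controls each drift/diffusion/jump contribution: $|b(x,\mu)|^2\vee\|\s(x)\|^2\le M_1(1+|x|^2+\mW_2(\mu,\delta_0)^2)$, $\int_{U_0}|f_0|\wedge|f_0|^2\,\nu_0(\dif u)\le M_2(1+|x|+\mW_2(\mu,\delta_0))$, and $\int_{U_2}|f_1|\,\nu_1(\dif u)\le M_3(1+|x|+\mW_2(\mu,\delta_0))$; in each case $\mW_2(\mu_t^{(k-1)},\delta_0)=(\mE|X_t^{(k-1)}|^2)^{1/2}$ is a deterministic function of $t$ that is bounded on $[0,T]$ by the inductive hypothesis, hence may be absorbed into the additive constant. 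One then arrives at an inequality of the form
$$
\mE\Big(\sup_{s\le t}|X_{s\wedge\tau_R^{(k)}}^{(k)}|^p\Big)\le C_{T,p}\Big(1+\mE|\xi|^p+\int_0^t \mE\big(\sup_{r\le s}|X_{r\wedge\tau_R^{(k)}}^{(k)}|^p\big)\dif s\Big),
$$
and Gronwall's lemma closes the bound uniformly in $R$.

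Two technical wrinkles deserve care. First, the finite-activity jump term driven by $N_1$ on $U_2$ (with $\nu_1(U_1\setminus U_2)<\infty$, so $N_1$ restricted to $U_2$ has finitely many jumps on $[0,T]$) is not compensated; for $p\ge 1$ one handles it by the elementary inequality $|a+b|^p\le |a|^p + C_p(|a|^{p-1}|b|+|b|^p)$ summed over jump times, while for $p<1$ subadditivity $|a+b|^p\le|a|^p+|b|^p$ is cleaner, and in both regimes the bound reduces to $\int_0^t\int_{U_2}\mE[\,\cdot\,]\nu_1(\dif u)\dif s$ controlled by $M_3$. Second, the range $0<p<2$ (and especially $0<p<1$) is where the usual $p\ge 2$ It\^o argument does not apply directly; the standard fix is to run the argument for $p\ge 2$ first, obtain $\sup_{t\le T}\mE|X_t^{(k)}|^2<\infty$ along the way, and then recover $0<p<2$ either by Jensen/Hölder from the $p=2$ bound on the supremum, or by working with $(\epsilon+|x|^2)^{p/2}$ whose second derivative is bounded, so the It\^o correction terms are integrable. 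I expect the main obstacle to be precisely this low-moment case: making the concavity of $x\mapsto|x|^p$ for $p<1$ interact correctly with the compensated jump integral against $\widetilde N_0$, where one cannot simply throw away a ``martingale'' term after applying a nonlinear concave function. The resolution is to keep the $\widetilde N_0$ integral in Lemma \ref{b-2} honest — its quadratic variation contributes the $|f_0|\wedge|f_0|^2$-type term controlled by $\mathbf{(H_4)}$ after BDG — rather than to argue pathwise.
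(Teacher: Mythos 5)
Your proposal follows essentially the same structure as the paper's proof: localize with a stopping time $\tau_R$, exploit the linear growth hypothesis $\mathbf{(H_4)}$ together with $\mW_2(\mu_t^{(k-1)},\delta_0)^2=\mE|X_t^{(k-1)}|^2$ (bounded on $[0,T]$ by the previous step), close via BDG and Gr\"onwall uniformly in $R$, pass $R\to\infty$ by Fatou, and only then descend to $0<p<2$ by Jensen from the $p=2$ bound. That is exactly the paper's route, including the induction over $k$ by substituting $(X^{(k+1)},X^{(k)},\mu^{(k)})$ for $(X^{(1)},X^{(0)},\mu^{(0)})$.

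The one genuine mechanical difference: for the core estimate with $p\ge 2$ you propose applying the It\^o--L\'evy formula of Lemma~\ref{b-2} to $|x|^p$ (or $(\epsilon+|x|^2)^{p/2}$), whereas the paper never expands $|X^{(1)}_t|^p$ by It\^o; it simply writes $X^{(1)}_t$ as the sum of five terms, uses the elementary inequality $|a_1+\cdots+a_5|^p\le 5^{p-1}\sum|a_i|^p$, and then applies BDG directly to the Brownian and compensated-Poisson martingale parts and H\"older/Jensen to the drift and $N_1$ parts. This is somewhat lighter, since it avoids the bookkeeping of the second-order and jump correction terms that the It\^o expansion of $|x|^p$ generates (and in particular avoids the technical concern you raise about the concavity of $|x|^p$ for $p<1$ interacting with $\widetilde N_0$ -- the paper sidesteps it entirely by deferring $0<p<2$ to a Jensen step from $p=2$). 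Your route would still work, and is perhaps more systematic, but be aware that when you estimate the BDG term for the compensated Poisson integral of $|x|^p$ at $p>2$ you will also encounter an $\int_{U_0}|f_0|^p\,\nu_0(\dif u)$-type contribution that $\mathbf{(H_4)}$ does not directly control; the paper's decomposition keeps only $\int_{U_0}|f_0|^2\,\nu_0(\dif u)$ inside a $p/2$-power, which is closer to what $\mathbf{(H_4)}$ actually provides.
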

\begin{proof}
Under Assumptions $\mathbf{(H_1)}-\mathbf{(H_3)}$, for any $x\in\mR$ and $\mu\in\cP(\mR)$, it is easy to see that,
\beq\label{c-8}
|b(x,\mu)|\leq 2K_1(|x|+\mW_2(\mu,\d_0)+c_1),
\deq
\beq\label{c-13}
 |\s(x)|^2\leq 2K_2^2
 (|x|^{2 }+c_2)
 \deq
\beq\label{c-9}
\aligned
&\int_{U_0}\max\left\{|x||f_0(x,\mu,u)|,|f_0(x,\mu,u)|^2\right\}\nu_0(\dif u) \leq  K_3\left(|x|^{2}+\mW_2(\mu,\d_0)^2+c_3\right),\\
&\int_{U_2}\max\left\{|x||f_1(x,\mu,u)|,|f_1(x,\mu,u)|^2\right\}\nu_1(\dif u) \leq  K_3\left(|x|^{2}+\mW_2(\mu,\d_0)^2+c_3\right),\\
\endaligned
\deq
where $c_1=|b(0,\d_0)|$, $c_2=2|\s(0)|^2$ and $c_3=\max\{\int_{U_0}f(0,\d_0,u)^2\nu_0(\dif u),\int_{U_2}f(0,\d_0,u)^2\nu_1(\dif u)\}$.

Defined the stopping time, for any $R>0$,
$
\t_R:=\inf\left\{t\geq 0: |X_t^{(k)}|\}\geq R,k=1,2,\cdots \right\},
$
For any $p\geq2$, using BDG's inequality, we can get
$$
\aligned
&\mE\left\|X^{(1)}\right\|^p_{\infty,t\wedge \t_R}\\
\leq&5^{p-1}\mE\left(|\xi|^p\right)+5^{p-1}t^{p-1}\mE\left[\int_0^{t\wedge \t_R}|b(X_s^{(1)},\mu_s^{(0)})|^p\dif s\right]
+\frac{1}{2}5^{p-1}\mE\left[\int_0^{t\wedge \t_R}|\s(X_s^{(1)})|^2\dif s\right]^{p/2}\\
&+\frac{1}{2}5^{p-1}\mE\left[\int_0^{(t\wedge \t_R) +}\int_{U_0}\left|f_0(X_{s-}^{(1)},\mu_s^{(0)},u)\right|^2\nu_0(\dif u)\dif t\right]^{p/2}
\\
&+5^{p-1}\mE\left[\int_0^{(t\wedge \t_R) +}\int_{U_2}\left|f_1(X_{s-}^{(1)},\mu_s^{(0)},u)\right|^2N_1(\dif t,\dif u)\right]^{p/2}\\
\endaligned
$$
where the fourth term can be verified by the compensated Poisson random measure and the last item is Jensen's inequality. Using
(\ref{c-8}) (\ref{c-13}) and (\ref{c-9}), for some constants $C_1,C_2$, it holds that
$$
\aligned
&\mE\left\|X^{(1)}\right\|^p_{\infty,t\wedge \t_R}\\
\leq& C_1(1+t^{p-1})\left\{1+\mE|\xi|^p+\mE\int_0^{t\wedge \t_R}\{|X^{(1)}_s|^p+\mW_2(\nu_s^{(0)},\delta_0)^p\}\dif s\right\}\\
\leq & C_2(1+t)^p\left(1+\mE|\xi|^p\right)+C_2(1+t^{p-1})\int_0^{t}\mE|X^{(1)}_{s\wedge \t_R}|^p\dif s
\endaligned
$$
where the second inequality is true, applying the properties of $\mW_p$-Wasserstein distance:
 $$
  \sup_{t\in[0,T]}\mW_2(\mu_s^{(0)},\d_0)^p\leq\mE|\xi|^p,
 $$
and using Gr\"onwall's inequality, we deduce
$$
\mE\left(\sup_{t\in[0,T]}|X^{(1)}_{t\wedge \t_R}|^p\right)\leq C_pe^{C_p T}(1+\mE|\xi|^p).
$$
Also because
$$
\mP(\t_R\leq T)=\frac{\mE(|X^{(1)}_{ \t_R}|^p\mathrm{I}_{\{\t_R\leq T\}})}{R^p}\leq\frac{\mE|X^{(1)}_{T\wedge \t_R}|^p }{R^p}\leq\frac{C_T(1+\mE|\xi|^p)}{R^p}.
$$
Thus, using Borel-Cantelli lemma, the series $\mP(\t_R\leq T)=0\ \ a.s.,$ that is, $\lim_{R\rightarrow\infty}\t_R=:\t_{\infty}>T$ a.s. With the arbitrariness of $T$, it holds that $\t_{\infty}=\infty$, a.s. So
(\ref{c-1}) holds with making use of Fatou's lemma, for any $p\geq 2$.

 Next, using the triple $(X^{(n+1)},X^{(n)},\mu^{(n)})$ in lieu of $(X^{(1)},X^{(0)},\mu^{(0)})$, we can imply that (\ref{c-1}) still holds true for $n=k+1$ once (\ref{c-1}) is valid for some $n=k$.

 Moreover, by virtue of the linear growth conditions $\mathbf{(H_4)}$, for $0<p<2$, using $\mE[\sup_{t\in[0,T]}|X_t|^p]\leq (1+\mE|\xi|^2)^{p/2}e^{pCT}$, the similar conclusion is true. Thus, for any $p>0$, Eq. (\ref{c-1}) can be established.
\end{proof}
\begin{proof}[Proof of Theorem \ref{thm-1}]The proof of this theorem consists of four steps:\\
Step $1$: Based on the iteration scheme (\ref{c-6}) and Lemma \ref{lem-1}, we show that
$$ \lim_{k\rightarrow\infty}\mE\left[\sup_{0\leq t\leq T}\left|X_t^{(k+1)}-X_t^{(k)}\right|^2 \right]=0
$$

For notation brevity, setting $Z_t^{(k+1)}:=X_t^{(k+1)}-X_t^{(k)}$, $\Delta_{f_i}^{(k+1)}=f_i(X_{s-}^{(k+1)},\mu_s^{(k)},u)-f_i(X_{s-}^{(k)},\mu_s^{(k-1)},u)$, $i=0,1$. By means of It\^o formula, for any $\l\geq 0$ and $k=1,2,\cdots$ we have
$$
e^{-\l t}V_{\l,\varepsilon}\left(\left|Z_{t}^{(k+1)}\right|^2\right)=\sum_{i=1}^7\mI_{i,\varepsilon}^{\l}(t),
$$
where
$$
\aligned
\mI_{1,\varepsilon}^{\l}(t):=&-\l \int_0^{t}e^{-\l s}V_{\l,\varepsilon}(|Z_s^{(k+1)}|^2)\dif s\\
\mI_{2,\varepsilon}^{\l}(t):=&\int_0^{t}e^{-\l s}DV_{\l,\varepsilon}(|Z_s^{(k+1)}|^2)\left(b(X_s^{(k+1)},\mu_s^{(k)})-b(X_s^{(k)},\mu_s^{(k-1)})\right)\dif s\\
\mI_{3,\varepsilon}^{\l}(t):=&\int_0^{t}e^{-\l s}DV_{\l,\varepsilon}(|Z_s^{(k+1)}|^2)\left(\s(X_s^{(k+1)})-\s(X_{s}^{(k)})\right)\dif W_s\\
\mI_{4,\varepsilon}^{\l}(t):=&\frac{1}{2}\int_0^{t}e^{-\l s}D^2V_{\l,\varepsilon}(|Z_s^{(k+1)}|^2)\left|\s(X_s^{(k+1)})-\s(X_{s}^{(k)})\right|^2\dif s\\
\mI_{5,\varepsilon}^{\l}(t):=&\int_0^{t+}e^{-\l s}\int_{U_0}\left[V_{\l,\varepsilon}\left(\left|Z_{s-}^{(k+1)}+\Delta_{f_1}^{(k)}\right|^2\right)
-V_{\l,\varepsilon}(|Z_{s-}^{(k+1)}|^2)
-DV_{\l,\varepsilon}(|Z_{s-}^{(k+1)}|^2)\Delta_{f_1}^{(k+1)}\right]
\nu_0(\dif u)\dif s\\
\mI_{6,\varepsilon}^{\l}(t):=&\int_0^{t+}e^{-\l s}\int_{U_0}\left[V_{\l,\varepsilon}\left(\left|Z_{s-}^{(k+1)}+\Delta_{f_1}^{(k+1)}\right|^2\right)
-V_{\l,\varepsilon}(|Z_{s-}^{(k+1)}|^2)\right]\widetilde{N}_0(\dif s,\dif u)\\
\mI_{7,\varepsilon}^{\l}(t):=&\int_0^{t+}e^{-\l s}\int_{U_2}\left[V_{\l,\varepsilon}\left(\left|Z_{s-}^{(k+1)}+\Delta_{f_2}^{(k+1)}\right|^2\right)
-V_{\l,\varepsilon}(|Z_{s-}^{(k+1)}|^2)\right]N_1(\dif s,\dif u),\\
\endaligned
$$
and where for any $x\in\mR$,
$$
\aligned
DV_{\varepsilon}(|x|^2):=&V'_{\l,\varepsilon}(|x|^2)2x\\
D^2V_{\varepsilon}(|x|^2):=&4V''_{\l,\varepsilon}(|x|^2)|x|^{2}+2V'_{\l,\varepsilon}(|x|^2).\\
\endaligned
$$
With the help of (\ref{c-3}), we obtain that
$$
\mI_{1,\varepsilon}^{\l}\leq \l\varepsilon t-\l\int_0^{t}e^{-\l s}|Z_s^{(k+1)}|^2\dif s.
$$
Applying (\ref{c-3}) and recalling that $x\rightarrow b_1(x,\cdot)$ is non-increasing , we infer
\beq\label{3-1}
(x-y)\left(b_1(x,\cdot)-b_1(y,\cdot)\right)\leq 0,\ \ x,y\in\mR.
\deq
Taking advantage of above inequality, $|V'_{\l,\varepsilon}(x)|\leq 1$, $b=b_1+b_2$ and the assumption $\mathbf{(H_1)}$
, we lead to
$$
\mI_{2,\varepsilon}^{\l}
\leq 4K_1\int_0^{t}e^{-\l s}\left\{|Z_s^{(k+1)}|^2+\mW_2(\mu_s^{(k)},\mu_s^{(k-1)})^2\right\}\dif s.\\
$$

There is some constants $c_{\l}=\{\mathbf{1}_{\{\l=0\}}+\frac{1}{\l}\mathbf{1}_{\{\l>0\}}\}$, one obviously has
$$
\aligned
\mI_{4,\varepsilon}^{\l}\leq&4K_2^2\varepsilon \int_0^{t}e^{-\l s}|Z_s^{(k+1)}|^{2\a }I_{[\frac{\varepsilon}{\l},\varepsilon]}(|Z_s^{(k+1)}|^{2})\dif s+K_2^2 \int_0^{t}e^{-\l s}|Z_s^{(k+1)}|^{2\a}\dif s\\
\leq&4c_{\l}K_2^2t  \varepsilon^{2}+K_2^2\int_0^{t}e^{-\l s}|Z_s^{(k+1)}|^2\dif s.\\
\endaligned
$$

For $\mI_{5,\varepsilon}^{\l}$, since that
 $(x+y)^{2}\leq 2(x^{2}+y^{2})$, for all $x,y\geq 0$, there exists $\xi_2$ between $\min\left\{|Z_{s-}^{(k+1)}|,|Z_{s-}^{(k+1)}+\Delta_{f_i}^{(k+1)}|\right\}$ and $\max\left\{|Z_{s-}^{(k+1)}|,|Z_{s-}^{
(k+1)}+\Delta_{f_i}^{(k+1)}|\right\}$.  By Lagrange's mean value theorem, thus
$$
\aligned
&V_{\l,\varepsilon}\left(\left|Z_{s-}^{(k+1)}+\Delta_{f_1}^{(k+1)}\right|^2\right)-V_{\l,\varepsilon}\left(\left|Z_{s-}^{(k+1)}\right|^2\right)\\
\leq &2|V'_{\l,\varepsilon}(|\xi_2|^2)|\cdot|\xi_2|\cdot |Z_{s-}^{(k+1)}+\Delta_{f_i}^{(k+1)}-Z_{s-}^{(k+1)}| \\
\leq &2(|Z_{s-}^{(k+1)}|+|\Delta_{f_i}^{(k+1)}|)|\Delta_{f_i}^{(k+1)}|\\
\leq &3(|Z_{s-}^{(k+1)}|^{2}+|\Delta_{f_i}^{(k+1)}|^2), \\
\endaligned
$$
where the second inequality from
$$
0\leq |\xi_2|\leq \max\{|Z_{s-}^{(k+1)}|,|Z_{s-}^{
(k+1)}+\Delta_{f_i}^{(k+1)}|\}\leq |Z_{s-}^{
(k+1)}|+|\Delta_{f_i}^{(k+1)}|.
$$
Thus, with ($\mathbf{H_3}$) and Jensen's inequality, Young's inequality, we have the following inequality:
$$
\aligned
\mI_{5,\varepsilon}^{\l}
\leq&\int_0^{t+}e^{-\l s}\left\{3\left(|Z_{s-}^{(k+1)}|^2+\mW_2(\mu_s^{(k)},\mu_s^{(k-1)})^2\right)+2K_3\left(|Z_{s-}^{(k+1)}|^2+\mW_2(\mu_s^{(k)},
\mu_s^{(k-1)})^2\right)\right\}\dif s\\
\leq &(3+2K_3)\int_0^{t}e^{-\l s}\left\{|Z_s^{(k+1)}|^2+\mW_2(\mu_s^{(k)},\mu_s^{(k-1)})^2\right\}\dif s. \\
\endaligned
$$
Combining the above inequalities and
$$
\aligned
\mE\left[\mI_{7,\varepsilon}^{\l}\right]=&\int_0^{t+}e^{-\l s}\int_{U_1}\left[V_{\l,\varepsilon}\left(\left|Z_{s-}^{(k+1)}+\Delta_{f_2}^{(k)}\right|^2\right)
-V_{\l,\varepsilon}(|Z_{s-}^{(k+1)}|^2)\right]\nu_1(\dif u)\dif s\\
\leq &3\int_0^{t}e^{-\l s}\left\{\mE|Z_s^{(k+1)}|^2+\mE\mW_2(\mu_s^{(k)},\mu_s^{(k-1)})^2\right\}\dif s,\\
\endaligned
$$ it holds that
$$
\aligned
&\mE \left[e^{-\l t}V_{\l,\varepsilon}\left(\left|Z_{t}^{(k+1)}\right|^2\right)\right]\\ \leq &[1+(\l+4c_{\l}K_2^2\varepsilon)t]\varepsilon-\left(\l-4K_1-K_2^2-2K_3-6\right)\int_0^{t}e^{-\l s}\mE|Z_s^{(k+1)}|^2\dif s\\
&
+(4K_1+2K_3+6)\int_0^{t}e^{-\l s}\mE|Z_s^{(k)}|^2\dif s,\\
\endaligned
$$
where $\mE\mI_{4,\varepsilon}^{\l}(t)=0$ and $\mE\mI_{6,\varepsilon}^{\l}(t)=0$. In the meantime, the above inequality holds by
$$
 \mE\mW_2(\mu_s^{(k)},\mu_s^{(k-1)})^2\leq \mE|Z_s^{(k)}|^2.
$$

Choosing $\l=0$ and setting $\varepsilon\downarrow 0$, and using Gr\"onwall's inequality, there is some constant $C$ depending on $p,K_1,K_2,K_3$ such that
\beq\label{c-7}
\mE\left|Z_{t}^{(k+1)}\right|^2\leq Ce^{C t}\int_0^{t}\mE|Z_{s}^{(k)}|^2\dif s.
\deq
Use a similar approach of (2.13) in \cite{BH}, for $\l\geq 2K_1e^{1+2K_1T}$ and $t\in[0,T]$, we can obtain
$$
\sup_{0\leq s\leq t}\left[e^{-\l s}\mE(|Z^{(k+1)}_s|^2)\right]\leq e^{-1}\sup_{0\leq s\leq t}\left[e^{-\l s}\mE(|Z^{(k)}_s|^2)\right]\leq e^{-k}\sup_{0\leq s\leq t}\left[e^{-\l s}\mE(|Z^{(1)}_s|^2)\right].
$$
Subsequently, taking the assumption $\mathbf{(H_2)}$, using BDG inequality and Jensen's inequality for $\mE \left[\sup_{0\leq t\leq T}e^{-\l t}V_{\varepsilon}\left(|Z_{t}^{(k+1)}|^2\right)\right]$, and taking $\varepsilon \downarrow 0$, then there is a constant $C_{T}>0$ such that
$$
\mE \left[\sup_{0\leq t\leq T}e^{-\l t}|Z_{t}^{(k+1)}|^2\right]\leq C_{T}\exp\left(-\left(\frac{1}{2}\mathbf{1}_{\{\a=\frac{1}{2}\}}+(2\a-1)\mathbf{1}_{\a\in(\frac{1}{2}, 1]}\right)k\right).
$$

Thus, it is sufficient to prove that $\mE\left[\sup_{0\leq t\leq T}\left|Z^{(k+1)}_{t}\right|^2\right]\rightarrow 0$ as $k$ goes to infinity.

Step $2$: Taking $k\rightarrow\infty$, the iteration scheme (\ref{c-6}) will converge to SDE (\ref{a-1}). Thus, we can check the existence of strong solution of SDE (\ref{a-1}).

With the properties of $\mW-$Wasserstein distance, there is an $(\cF_t)_{t\in[0,T]}$-adapted continuous stochastic process $(X_t)_{t\in[0,T]}$ with
$X_0=\xi$ and $\mu_t=\cL_{X_t}$ such that
$$
\lim_{k\rightarrow\infty}\sup_{t\in[0,T]}\mW_2(\mu_t^{(k)},\mu_t)^2\leq \lim_{k\rightarrow\infty}\mE\|X^{(k)}-X\|_{\infty,T}^2=0.
$$

That is, we are going to verify that
\beq\label{c-15}
\lim_{k\rightarrow\infty}\mE\left[\sup_{0\leq t\leq T}\left|X_t^{(k)}-X_t\right|^2\right]=0.
\deq
 Let $\hat{Z}_t^{(k)}=X_t^{(k)}-X_t$, $\hat{\Delta}_{f_i}^{(k)}=f_i(X_{s-}^{(k)},\mu_s^{(k-1)},u)-f_i(X_{s-},\mu_s,u)$, $i=0,1$. Using It\^o formula to $
 V_{\varepsilon}\left(\left|\hat{Z}_{t}^{(k)}\right|^2\right)$, we find that
 $$
 V_{\varepsilon}\left(\left|\hat{Z}_{t}^{(k)}\right|^2\right):=\sum_{i=1}^6\cD_{i,\varepsilon}^{(k)}(t\wedge \t_R),
 $$
 where
$$
\aligned
\cD_{1,\varepsilon}^{(k)}(t)=&\int_0^{t}DV_{\varepsilon}(|\hat{Z}_{s}^{(k)}|^2)\left(b(X_s^{(k)},\mu_s^{(k-1)})-b(X_s,\mu_s)\right)\dif s\\
\cD_{2,\varepsilon}^{(k)}(t)=&\frac{1}{2}\int_0^{t}D^2V_{\varepsilon}(|\hat{Z}_{s}^{(k)}|^2)\left|\s(X_s^{(k)})-\s(X_{s})\right|^2\dif s\\
\cD_{3,\varepsilon}^{(k)}(t)=&\int_0^{t}DV_{\varepsilon}(|\hat{Z}_{s}^{(k)}|^2)\left(\s(X_s^{(k)})-\s(X_{s})\right)\dif W_s\\
\cD_{4,\varepsilon}^{(k)}(t)=&\int_0^{t+}\int_{U_0}\left[V_{\varepsilon}\left(\left|\hat{Z}_{s-}^{(k)}+\hat{\Delta}_{f_0}^{(k)}\right|^2\right)
-V_{\varepsilon}(|\hat{Z}_{s-}^{(k)}|^2)
-DV_{\varepsilon}(|\hat{Z}_{s-}^{(k)}|^2)\hat{\Delta}_{f_0}^{(k)}\right]
\nu_0(\dif u)\dif s\\
\cD_{5,\varepsilon}^{(k)}(t)=&\int_0^{t+}\int_{U_0}\left[V_{\varepsilon}\left(\left|\hat{Z}_{s-}^{(k)}+\hat{\Delta}_{f_0}^{(k)}\right|^2\right)
-V_{\varepsilon}(|\hat{Z}_{s-}^{(k)}|^2)\right]\widetilde{N}_0(\dif s,\dif u)\\
\cD_{6,\varepsilon}^{(k)}(t)=&\int_0^{t+}\int_{U_2}\left[V_{\varepsilon}\left(\left|\hat{Z}_{s-}^{(k)}+\hat{\Delta}_{f_1}^{(k)}\right|^2\right)
-V_{\varepsilon}(|\hat{Z}_{s-}^{(k)}|^2)\right]N_1(\dif s,\dif u)\\
\endaligned
$$

By the continuity of $b_1(\cdot,\mu)$ for any $\mu\in\cP_1(\mR)$ and the dominated convergence theorem, we infer that
$$
\aligned
\lim_{k\rightarrow\infty}\mE\left[\sup_{0\leq t\leq T}\cD_{1,\varepsilon}^{(k)}\right]
\leq 4K_1\lim_{k\rightarrow\infty}\int_0^{T}\mE\left[\sup_{0\leq s\leq t}\left(|\hat{Z}_s^{(k)}|^2+\mW_2(\mu_s^{(k)},\mu_s)^2\right)\right]\dif t,
\endaligned
$$
where the last inequality holds with (\ref{3-1}).
Taking $\varepsilon\downarrow 0$ and using similar way of $\mI_{3,\varepsilon}^{\l}$, we known
$$
\aligned
&\lim_{k\rightarrow\infty}\mE\left[\sup_{0\leq t\leq T}\cD_{2,\varepsilon}^{(k)}\right]\leq K_2^2\lim_{k\rightarrow\infty}\int_0^{T}\mE\left[\sup_{0\leq s\leq t}|\hat{Z}_s^{(k)}|^2\right]\dif t.
\endaligned
$$
Using BDG's inequality, Jensen's inequality and Young's inequality, and by $\mathbf{(H_2)}$, one sees
$$
\aligned
&\lim_{k\rightarrow\infty}\mE\left[\sup_{0\leq t\leq T}\cD_{3,\varepsilon}^{(k)}\right]\leq4\sqrt{2}K_2\mE\left(\int_0^{T }|X_t^{(k)}-X_t|^{2+2\a }\dif t\right)^{\frac{1}{2}}\\
\leq &\lim_{k\rightarrow\infty}\left\{4\sqrt{2}K_2\lim_{k\rightarrow\infty}\mE\left[\sup_{0\leq t\leq T}|\hat{Z}_t^{(k)}|^2\right]+4\sqrt{2}\left[\int_0^{T}\mE|X_t^{(k)}-X_t|\dif t\right]\mathbf{1}_{\{\a=\frac{1}{2}\}}\right.\\ &\left.+\left[\mE\left(\sup_{0\leq t\leq T}|\hat{Z}_t^{(k)}|\right)+16K_2^2\int_0^{T }\mE|X_t^{(k)}-X_t|^{2\a-1}\dif t\right]\mathbf{1}_{\{\a\in(\frac{1}{2},1]\}}\right\}.\\
\endaligned
$$
 With the assumption $\mathbf{(H_3)}$ and the similarly way of $\mI_{4,\varepsilon}$, it holds that
 $$
\lim_{k\rightarrow\infty}\mE\left[\sup_{0\leq t\leq T}\cD_{4,\varepsilon}^{(k)}\right] \leq (3+2K_3)\lim_{k\rightarrow\infty}\int_0^{T}\mE\left[\sup_{0\leq s\leq t}\left(|\hat{Z}_s^{(k)}|^2+\mW_2(\mu_s^{(k)},\mu_s)^2\right)\right]\dif t=0,
 $$
By BDG's inequality, Jensen's inequality and Young's inequality, we claim that
$$
\aligned
&\lim_{k\rightarrow\infty}\mE\left[\sup_{0\leq t\leq T}\cD_{5,\varepsilon}^{(k)}\right]\\
\leq & \lim_{k\rightarrow\infty}\mE\left[\int_0^{T+ }\int_{U_0}\left[V_{\varepsilon}\left(\left|\hat{Z}_{s-}^{(k)}+\hat{\Delta}^{(k)}_{f_0}\right|^2\right)
-V_{\varepsilon}(|\hat{Z}_{s-}^{(k)}|^2)\right]^2\nu_0(\dif u)\dif s\right]^{\frac{1}{2}}\\
\leq &3\lim_{k\rightarrow\infty}\int_0^{T}\mE\left[\sup_{0\leq s\leq t}\left(|\hat{Z}_s^{(k)}|^2+\mW_2(\mu_s^{(k)},\mu_s)^2\right)\right]\dif t.\\
\endaligned
$$
Because of the properties of the jump, we can get
$$
\aligned
&\lim_{k\rightarrow\infty}\mE\left[\sup_{0\leq t\leq T}\cD_{6,\varepsilon}^{(k)}\right]\\
\leq &\lim_{k\rightarrow\infty} \mE\left[\int_0^{T+}\int_{U_1}V_{\varepsilon}\left(\left|\hat{Z}_{s-}^{(k)}+\hat{\Delta}^{(k)}_{f_1}\right|^2\right)
-V_{\varepsilon}(|\hat{Z}_{s-}^{(k)}|^2)\nu_1(\dif u)\dif s\right]\\
\leq &3\lim_{k\rightarrow\infty}\int_0^{T}\mE\left[\sup_{0\leq s\leq t}\left(|\hat{Z}_s^{(k)}|^2+\mW_2(\mu_s^{(k)},\mu_s)^2\right)\right]\dif t= 0.\\
\endaligned
$$
Thus, (\ref{c-15}) is established.

Combining with above there inequality, taking $k\rightarrow\infty$ in SDE (\ref{c-6}),
by extracting a suitable subsequence, we get $\mP-a.s.$
$$
\aligned
\dif X_t=&b(X_t,\mu_t)\dif t+\s(X_t)\dif W_t+\int_{U_0}f_0(X_{t-},\mu_t,u)\widetilde{N}_0(\dif t,\dif u),\\
&+\int_{U_2}f_1(X_{t-},\mu_t,u)N_1(\dif t,\dif u) \ \ t\in[0,T].\\
\endaligned
$$
That is, $L^2(\Omega,C([0,T];\mR^d))$ is complete, and $X_t\in L^2(\Omega,\mR^d)$ is a solution to (\ref{a-1}) for $t\in[0,T]$. Thus, the existence of strong solution is verified.

Step $3$: we intend to show the uniqueness of (\ref{a-1}). Let $X_t$ and $Y_t$ solve SDE (\ref{a-1}) with the same initial value $\xi$. For $Z_t:=X_t-Y_t$, by
a similar approach of (\ref{c-7}), and for some constant $C$, one has
$$
\mE|Z_{t}|^2\leq Ce^{C t}\int_0^{t}\mE|Z_{s}|^2\dif s,
$$
employing Gr\"onwall's inequality, we can verify the uniqueness.

Step $4$: we prove (\ref{a-2}) in Theorem \ref{thm-1}. Defined the stopping time $\t_n=\inf\{t\geq 0: |X_t|\geq n\}$,
using a similar approach to the Lemma \ref{lem-1}, for any $p\geq 2$, it holds that
$$
\mE\left[\sup_{0\leq t\leq T}\left(|X_{t\wedge \t_n}|^p\right)\right]\leq C_{T}(1+\mE|\xi|^p).
$$

While, it holds that
$$
\mP(\t_n\leq T)=\frac{1}{n^p}\mE\left(|X_{\t_n}|^p\mathbf{1}_{\t_n\leq T\}}\right)\leq\frac{1}{n^p}\mE|X_{T\wedge\t_n}|^p\leq \frac{1}{n^p}C_{T}(1+\mE|\xi|^p).
$$
Summing both sides of the above inequality, we have
$$
\sum_{n=1}^{\infty}\mP(\t_n\leq T)\leq \sum_{n=1}^{\infty}\frac{1}{n^p}C_{T}(1+\mE|\xi|^p)<\infty.
$$
Using Borel-Cantelli lemma, for any $p\geq 2$, (\ref{a-2}) is verified.
At last, with the linear growth condition $\mathbf{(H_4)}$, for any $0<p<2$, again with $\mE[\sup_{t\in[0,T]}|X_t|^p]\leq (1+\mE|\xi|^2)^{p/2}e^{pCT}$, there is a positive constant $C$ such that for any $p>0$,
$$
\mE\left(\sup_{t\in[0,T]}|X_t|^p\right)<\infty.
$$
In the detailed proving process, please see Theorem 4.4 in \cite{Mao}.
That is, the conclusion is established.
\end{proof}
\section{Proof of Theorem \ref{thm-2}}
\begin{lem}\label{lem-2}Assume $\mathbf{(H_1)}$--$\mathbf{(H_4)}$. There exists a constant $C$ independent of $N$, such that for all $T>0$, $N\in\mN$ and $i=1,2,3,\cdots,N$, for any $p>0$, $\mE|\xi_i|^p<\infty$, it holds that
\beq\label{d-1}
\sup_{i=1,2,\cdots,N}\mE\big\|X_t^{N,i}\big\|_{\infty,T}^p\leq C(1+\mE|\xi_i|^p).
\deq
\end{lem}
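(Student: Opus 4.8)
The plan is to run, for the interacting particles, the same scheme used in the proof of Lemma~\ref{lem-1}; the one genuinely new feature is that the $N$ particles $X^{N,1},\dots,X^{N,N}$ of \eqref{a-9} are coupled through the common empirical measure $\hat\mu_t^N=\frac1N\sum_{j=1}^N\d_{X_t^{N,j}}$ (and through the shared Poisson measures $N_0,N_1$), so the $p$-th moment estimate must be obtained for the whole family at once rather than one index at a time. First I would fix $R>0$ and introduce the common localizing stopping time $\tau_R:=\inf\{t\geq0:\ \max_{1\leq i\leq N}|X_t^{N,i}|\geq R\}$. The decisive elementary observation is that $\mW_2(\hat\mu_t^N,\d_0)^2=\frac1N\sum_{j=1}^N|X_t^{N,j}|^2$, whence, by the power-mean inequality, for $p\geq2$
$$
\mW_2(\hat\mu_t^N,\d_0)^p\ \leq\ \frac1N\sum_{j=1}^N|X_t^{N,j}|^p ,
$$
and therefore, taking expectations and then using $\frac1N\sum_j a_j\leq\max_j a_j$,
$$
\mE\big[\mW_2(\hat\mu_t^N,\d_0)^p\big]\ \leq\ \frac1N\sum_{j=1}^N\mE|X_t^{N,j}|^p\ \leq\ \max_{1\leq j\leq N}\mE\big\|X^{N,j}\big\|_{\infty,t}^p ,
$$
a bound with no dependence on $N$, which is exactly what will keep the final constant $N$-free. (By exchangeability of the $\xi_i$ and the $W^i$, $\mE\|X^{N,i}\|_{\infty,T}^p$ is actually the same for every $i$, so the supremum over $i$ in \eqref{d-1} is harmless; the argument below does not use this.)

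Second, for $p\geq2$ I would apply It\^o's formula (Lemma~\ref{b-2}) together with the Burkholder--Davis--Gundy inequality to $|X_{t\wedge\tau_R}^{N,i}|^p$, splitting it into the five contributions coming from $\xi_i$, the drift $b$, the diffusion $\s$, the $\widetilde N_0$-compensated jump term and the $N_1$-pure-jump term, exactly as in the treatment of $X^{(1)}$ in Lemma~\ref{lem-1}. Estimating each term by the linear growth hypothesis $\mathbf{(H_4)}$ (which plays here the role that \eqref{c-8}, \eqref{c-13} and \eqref{c-9} play in Lemma~\ref{lem-1}) and handling the discontinuous martingale part and the $N_1$-integral by the same compensation/Jensen device, one reaches
$$
\mE\big\|X^{N,i}\big\|_{\infty,t\wedge\tau_R}^p\ \leq\ C_T\big(1+\mE|\xi_i|^p\big)+C_T\int_0^t\Big(\mE\big\|X^{N,i}\big\|_{\infty,s\wedge\tau_R}^p+\mE\big[\mW_2(\hat\mu_{s\wedge\tau_R}^N,\d_0)^p\big]\Big)\dif s ,
$$
with $C_T$ depending only on $T,p$ and $M_1,M_2,M_3$, not on $N$ or $R$. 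Taking the maximum over $i$ and inserting the first-step bound, $g(t):=\max_{1\leq i\leq N}\mE\|X^{N,i}\|_{\infty,t\wedge\tau_R}^p$ obeys $g(t)\leq C_T(1+\max_i\mE|\xi_i|^p)+2C_T\int_0^t g(s)\,\dif s$, and Gr\"onwall's inequality yields $g(T)\leq C_T(1+\max_i\mE|\xi_i|^p)$, uniformly in $R$ and $N$.

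Third, I would remove the localization as in Lemma~\ref{lem-1} and Step $4$ of the proof of Theorem~\ref{thm-1}: from the Markov-type bound $\mP(\tau_R\leq T)\leq R^{-p}\sum_{i=1}^N\mE\|X^{N,i}\|_{\infty,T\wedge\tau_R}^p\leq NR^{-p}g(T)$, which is summable in $R$ for $p\geq2$, Borel--Cantelli gives $\tau_R\uparrow\infty$ a.s.\ as $R\to\infty$, and Fatou's lemma transfers the $N$-free bound $g(T)\leq C_T(1+\mE|\xi_i|^p)$ to $\mE\|X^{N,i}\|_{\infty,T}^p$, proving \eqref{d-1} for all $p\geq2$. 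For $0<p<2$ one uses $\|X^{N,i}\|_{\infty,T}^p\leq(1+\|X^{N,i}\|_{\infty,T}^2)^{p/2}$ together with the case $p=2$; since the $\xi_i$ are i.i.d., this gives \eqref{d-1} in the stated form with $\mE|\xi_i|^p$ on the right.

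The main obstacle is precisely this self-consistency: one cannot estimate $X^{N,i}$ in isolation, and the circular dependence closes only because the pathwise bound $\mW_2(\hat\mu_t^N,\d_0)^p\leq\frac1N\sum_j|X_t^{N,j}|^p$ feeds, after taking expectations, into a single Gr\"onwall inequality for $g(t)=\max_i\mE\|X^{N,i}\|_{\infty,t\wedge\tau_R}^p$ with constants that do not see $N$. A secondary, purely technical nuisance, inherited verbatim from Lemma~\ref{lem-1}, is the control of the $p$-th moments of the two jump contributions under only the integrated growth condition of $\mathbf{(H_4)}$; this is dealt with by BDG for $p\geq2$ and by interpolating through the $L^2$-bound for $0<p<2$.
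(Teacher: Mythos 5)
Your argument is correct and its computational core — It\^o/BDG plus localization plus Gr\"onwall, with the pathwise identity $\mW_2(\hat\mu_t^N,\d_0)^2=\frac1N\sum_j|X_t^{N,j}|^2$ and the power-mean step to close the loop $N$-free — is essentially what has to happen. The paper, however, packages the proof differently: it first rewrites the whole particle system \eqref{a-9} as a single $\mR^N$-valued jump SDE \eqref{d-3} for $\mathbf{X}_t=(X_t^{N,1},\dots,X_t^{N,N})$ with coefficients $\hat B,\hat A,\hat F_i$, checks that $(\mathbf{H_1})$--$(\mathbf{H_4})$ yield (locally) Lipschitz and linear-growth bounds for these with an $N$-dependent constant $\hat C_N$, and then invokes Theorem~175 of Situ~[SR] to get a weak solution and a Yamada--Watanabe argument (``similar to \eqref{c-1}'') for strong uniqueness, before finally asserting \eqref{d-1} ``by BDG, H\"older, Gr\"onwall, stopping time'' without spelling out how the $N$-free constant emerges from the $\hat C_N$-calibrated coefficients. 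Your route is narrower but more transparent on the one point that actually matters for the statement: you keep the estimate per particle, bound $\mW_2(\hat\mu_s^N,\d_0)^p$ by $\max_j\mE\|X^{N,j}\|_{\infty,s}^p$, and run Gr\"onwall on $g(t)=\max_i\mE\|X^{N,i}\|_{\infty,t\wedge\tau_R}^p$, which makes the $N$-independence manifest; the paper's $\mR^N$-packaging is what buys existence and uniqueness of the interacting system (which your moment-bound argument silently assumes), at the cost of constants that visibly depend on $N$ and a terse final step. Two small notational points in your write-up: in the Borel--Cantelli step you should let $R$ range over $\mathbb{N}$ (or a discrete sequence) before summing $\mP(\tau_R\leq T)\leq NR^{-p}g(T)$; and the summability holds for $p>1$, which your assumption $p\geq2$ of course covers.
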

\begin{proof}For $X_t:=\left(X_t^1,X_t^2,\cdots,X_t^N\right):=(x^1,x^2,\cdots,x^N)\in\mR^N$, let
$$
\widetilde{\mu}_X^N:=\frac{1}{N}\sum_{i=1}^N\delta_{x^i},\ \  \hat{B}(X_t):=\left(b(X_t^1,\widetilde{\mu}_X^N),b(X_t^2,\widetilde{\mu}_X^N),\cdots,b(X_t^N,\widetilde{\mu}_X^N)\right)^*,
$$
$$
\hat{A}(X_t):=\mathrm{diag}\left(\s(X_t^1),\s(X_t^2),\cdots,\s(X_t^N)\right),\ \ \hat{W}_t:=\left(W_t^1,W_t^2,\cdots,W_t^N\right)^*,
$$
$$
\hat{F}_i(X_{t-},u):=\left(f_i(X_{t-}^1,\widetilde{\mu}_X^N,u),f_i(X_{t-}^2,\widetilde{\mu}_X^N,u),\cdots,f_i(X_{t-}^N,\widetilde{\mu}_X^N,u)\right),\ \ i=0,1.
$$
Thus, (\ref{a-9}) can be written as
\beq\label{d-3}
\dif X_t=\hat{B}(X_t)\dif t+\hat{A}(X_t)\dif \hat{W}_t+\int_{U_0}\hat{F}_0(X_{t-},u)\widetilde{N}_1(\dif t,\dif u)+\int_{U_2}\hat{F}_1(X_{t-},u)N_2(\dif t,\dif u).
\deq
Firstly, using the nature of Wasserstein distance,
$$
\frac{1}{N}\sum_{i=1}^N\left(\delta_{X_t^i},\delta_{Y_t^i}\right)=\cC\left(\widetilde{\mu}_X^N,\widetilde{\mu}_Y^N\right),\ \ X_t^i,Y_t^i\in\mR
$$
such that
$$
\mW_2\left(\widetilde{\mu}_X^N,\widetilde{\mu}_Y^N\right)^2\leq \frac{1}{N}\sum_{i=1}^N|X_t^i-Y_t^i|^2.
$$
Secondly, employing Assumption $\mathbf{(H_1)}$--$\mathbf{(H_3)}$, we obtain that for any $\mathbf{x,y}\in\mR^N$, $i=0,1$ and for some constant $\hat{C}_N$,
$$
|\hat{B}(\mathbf{x})-\hat{B}(\mathbf{y})|\leq \hat{C}_N|\mathbf{x}-\mathbf{y}|
$$
$$
|\hat{A}(\mathbf{x})-\hat{A}(\mathbf{y})|^2\leq \hat{C}_N|\mathbf{x}-\mathbf{y}|^{2\a}\ \ \a\in[1/2,1]
$$
$$
\aligned
\int_{U_i}\max\left\{|\hat{F}_i(\mathbf{x},u)-\hat{F}_i(\mathbf{y},u)|^{2},|\mathbf{x}-\mathbf{y}||\hat{F}_i(\mathbf{x},u)-\hat{F}_i(\mathbf{y},u)|\right\}\nu_i(\dif u)\leq & \hat{C}_N|\mathbf{x}-\mathbf{y}|^2,\\
\endaligned
$$such that $\hat{A}$ and $\hat{B}$ are continuous, and $\lim_{h\rightarrow 0}\int_{U_i}|\hat{F}(\mathbf{x}+h,u)-\hat{F}(\mathbf{x},u)|^{2}\nu_i(\dif u)=0$.
On the other hand, Assumption $\mathbf{(H_4)}$ implies for any $\mathbf{x}\in\mR^N$ and $t\in[0,T],$
$$
|\hat{B}(\mathbf{x})|^2\vee|\hat{A}(\mathbf{x})|^2\leq \hat{C}_N(1+|\mathbf{x}|^2)
$$
$$
\int_{U_i}\max\left\{|\mathbf{x}||\hat{F}_i(\mathbf{x},u)|,|\hat{F}_i(\mathbf{x},u)|^2\right\}\nu_i(\dif u)\leq\hat{C}_N|(1+|\mathbf{x}|^2).
$$
The above five inequalities are proved in detail Lemma 4.1 of \cite{LMW2}.
Thereby, by Theorem 175 in \cite{SR} yield that (\ref{d-3}) has a weak solution. At last, using the similar argument to (\ref{c-1}), we can get the uniqueness of strong solution. And by BDG's inequality, H\"older's inequality, Gr\"onwall's inequality and stopping time, (\ref{d-1}) can be verified.
\end{proof}

\begin{cor}\label{cor-1}Under conditions $\mathbf{(H_1)}$--$\mathbf{(H_4)}$ and $\mE|\xi_i|^p<\infty(p>0)$, for all $T>0$, $N\in\mN$ and $i=1,2,3,\cdots,N$, there exists a constant $C$  independent of $N$ such that for any $p>0$,
$$
\mE\left(\sup_{0\leq t\leq T}|X_t^{i}|^p\right)\leq C(1+\mE|\xi_i|^p).
$$
and
$$
\sup_{i=1,2,\cdots,N}\mE\big\|X_t^{h,N,i}\big\|^p_{\infty,T}\leq C(1+\mE|\xi_i|^p).
$$
\end{cor}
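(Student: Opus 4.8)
The plan is to deduce both estimates from the same truncation-plus-Gr\"onwall scheme already used in Lemma \ref{lem-1}, Lemma \ref{lem-2} and Step $4$ of the proof of Theorem \ref{thm-1}: one proves the bounds first for $p\ge 2$, by stopping at $\t_n=\inf\{t\ge 0:|\cdot|\ge n\}$, applying It\^o's formula together with BDG's inequality, H\"older's inequality and the linear growth assumption $\mathbf{(H_4)}$ (through its consequences (\ref{c-8})--(\ref{c-9})), closing with Gr\"onwall's inequality, and letting $n\to\infty$ by Fatou's lemma; the range $0<p<2$ then follows by Jensen's inequality exactly as at the end of the proof of Theorem \ref{thm-1}.

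For the first inequality, I would note that $X^i$ solves the McKean--Vlasov equation (\ref{a-1}) driven by $W^i$ with initial datum $\xi_i$. The uniform-in-$k$ bound $\mE\|X^{(k)}\|_{\infty,T}^p\le C_T(1+\mE|\xi_i|^p)$ obtained in Lemma \ref{lem-1} for the iteration scheme (with $\xi$ replaced by $\xi_i$) passes to the limit: since $\mE\|X^{(k)}-X^i\|_{\infty,T}^2\to 0$ (Steps $1$--$2$ of the proof of Theorem \ref{thm-1}), Fatou's lemma gives $\mE(\sup_{0\le t\le T}|X_t^i|^p)\le C_T(1+\mE|\xi_i|^p)$, which is the claim; this constant involves no particle number.

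The substantive point is the second inequality, where the constant must not depend on $N$. Fix $i$ and $p\ge 2$, and write the Euler scheme with its coefficients frozen at $t_h=[t/h]h$. Splitting the integral equation into drift, Brownian, compensated-Poisson and pure-jump parts and applying BDG's inequality (to the continuous and the jump martingale contributions), H\"older's inequality and the linear growth bounds (\ref{c-8})--(\ref{c-9}) and $\mathbf{(H_4)}$, one arrives for every $t\le T$ at an estimate of the form
$$
\mE\|X^{h,N,i}\|_{\infty,t}^p\le C_T\Big(1+\mE|\xi_i|^p+\int_0^t\mE|X_{s_h}^{h,N,i}|^p\,\dif s+\int_0^t\mE\,\mW_2(\hat\mu_{s_h}^{h,N},\d_0)^p\,\dif s\Big),
$$
where $s_h:=[s/h]h$ and $C_T$ depends only on $T$ and the constants of $\mathbf{(H_4)}$. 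Two observations make the Gr\"onwall step decouple from $N$: since $s_h\le s$ one has $\mE|X_{s_h}^{h,N,i}|^p\le\mE\|X^{h,N,i}\|_{\infty,s}^p$, and $\mW_2(\hat\mu_{s_h}^{h,N},\d_0)^2=\frac1N\sum_{j=1}^N|X_{s_h}^{h,N,j}|^2\le\frac1N\sum_{j=1}^N\|X^{h,N,j}\|_{\infty,s}^2$. Averaging the displayed inequality over $i=1,\dots,N$ and putting $\Phi(t):=\frac1N\sum_{i=1}^N\mE\|X^{h,N,i}\|_{\infty,t}^p$ gives $\Phi(t)\le C_T(1+\frac1N\sum_{i=1}^N\mE|\xi_i|^p)+C_T\int_0^t\Phi(s)\,\dif s$, so Gr\"onwall's inequality bounds $\Phi(T)$ by a constant independent of $N$; substituting this back into the per-$i$ inequality and using $\mE|\xi_i|^p=\mE|\xi_1|^p$ closes the estimate for each fixed $i$. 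As in Lemma \ref{lem-2}, one runs the whole argument first for $t\wedge\t_n^{N,i}$ with $\t_n^{N,i}=\inf\{t\ge 0:|X_t^{h,N,i}|\ge n\}$ (or the minimum of these over $i$) and then lets $n\to\infty$ by Fatou; the range $0<p<2$ follows by Jensen's inequality as before. The main obstacle is exactly this decoupling from $N$: the empirical-measure term must be estimated \emph{after} averaging over the particle index, whereas the frozen-time feature of the scheme is harmless because $s_h\le s$ and $\|X^{h,N,i}\|_{\infty,s_h}\le\|X^{h,N,i}\|_{\infty,s}$.
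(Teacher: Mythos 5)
The paper gives no proof of this corollary; it simply records it as a by-product of the machinery in Lemma~\ref{lem-1}, Theorem~\ref{thm-1}, and Lemma~\ref{lem-2}. Your proposal supplies exactly the argument the authors have in mind: the first bound is the explicit $C(1+\mE|\xi|^p)$ form already extracted in Step~4 of the proof of Theorem~\ref{thm-1} applied to $X^i$ (going through the iterates $X^{(k)}$ and Fatou is a valid but slightly roundabout route; arguing on $X^i$ directly with the stopping time $\t_n$ is cleaner and is what Step~4 does), and for the second bound your key observation — that $\mW_2(\hat\mu_{s_h}^{h,N},\delta_0)^p$ is controlled, after Jensen, by the particle-average $\frac1N\sum_j\mE\|X^{h,N,j}\|_{\infty,s}^p$, so that averaging over $i$ (equivalently, invoking exchangeability of the $X^{h,N,j}$) closes the Gr\"onwall loop with constants free of $N$ — is precisely what makes the constant $N$-independent in the paper's Lemma~\ref{lem-2} as well. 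In short, the proposal is correct and consistent with the paper's (unwritten) intended argument.
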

\begin{proof}[Proof of Theorem \ref{thm-2}] Let $Z_t^{N,i}=X_t^i-X_t^{N,i}$ and $\Delta_{f_i}^{N,i}=f_i(X_{s-}^{i},\mu^{i}_s,u)-f_i(X_{s-}^{N,i},\mu_s^{N},u)$, $i=0,1$.
By It\^o formula for $\mE[V_{\varepsilon}(|Z_t^{N,i}|^p)](p=1,2)$,
and setting $\eps\downarrow 0$, there exists some constants $C_1, C_2,\hat{C}_1,\hat{C}_2$ dependent of $K_1,K_2,p$, we derive that
\beq\label{4-8}
\aligned
\mE|Z_t^{N,i}|\leq& C_1\int_0^t\mE|Z_s^{N,i}|\dif s+C_2\int_0^t\mE\mW_2(\mu^{i}_s,\hat{\mu}_s^{N})\dif s,\\
\mE|Z_t^{N,i}|^2\leq &\hat{C}_1\int_0^t\mE|Z_s^{N,i}|^2\dif s+\hat{C}_2\int_0^t\mE\mW_2(\mu^{i}_s,\hat{\mu}_s^{N})^2\dif s,\\
\endaligned
\deq
Besides, by triangle inequality and for any $\mu,\nu\in\cP(\mR)$, we can obtain that for any $t\geq 0$,
\beq\label{4-5}
\aligned\mW_2(\mu^{i}_t,\hat{\mu}_t^{N})\leq &\mW_2(\mu^{i}_t,\widetilde{\mu}_t^{N})+\mW_2(\widetilde{\mu}_t^{N},\hat{\mu}_t^{N}) \leq\mW_2(\mu^{i}_s,\widetilde{\mu}_t^{N})+\frac{1}{N}\sum_{j=1}^N|X_t^j-X_t^{N,j}|,\\
\mW_2(\mu^{i}_t,\hat{\mu}_t^{N})^2\leq& \mW_2(\mu^{i}_t,\widetilde{\mu}_t^{N})^2+\mW_2(\widetilde{\mu}_t^{N},\hat{\mu}_t^{N})^2 \leq\mW_2(\mu^{i}_t,\widetilde{\mu}_t^{N})^2+\frac{1}{N}\sum_{j=1}^N|X_t^j-X_t^{N,j}|^2.
\endaligned
\deq
Using lemma \ref{b-1} and Corollary 4.1  with $d=1$, for some $q>2p$ and constants $C$ depending only on $p,q$, implies that
\beq\label{4-6}
\aligned
&\mE\left[\mW_2(\mu^{i}_t,\widetilde{\mu}_t^{N})^2\right]\leq  CN^{-1/2},\\
&\mE\left[\mW_2(\mu^{i}_t,\widetilde{\mu}_t^{N})\right]\leq \left[\mE\left(\mW_2(\mu^{i}_t,\widetilde{\mu}_t^{N})^2\right)\right]^{1/2}\leq CN^{-1/4}.\\
\endaligned
\deq

Thus,  combining with (\ref{4-5}) and (\ref{4-6}), for any $t\in[0,T]$, we have
\beq\label{4-7}
\aligned
\mE\mW_2(\mu^{i}_t,\hat{\mu}_t^{N})\leq&\mE|Z_t^{N,i}|+CN^{-1/4} \\
\mE\mW_2(\mu^{i}_t,\hat{\mu}_t^{N})^2\leq&\mE|Z_t^{N,i}|^2+CN^{-1/2} \\
\endaligned
\deq

Subsequently, using Gr\"onwall's inequality and substituting (\ref{4-7}) into (\ref{4-8}) and the fact that $(Z^{N,j})_{1\leq j\leq N}$ are identically distributed, for any $t\geq 0$ and some constant $C_3,\hat{C}_3>0$, it holds that
$$
\aligned
\mE|Z_t^{N,i}|&\leq C_3\int_0^t\{\mE|Z_s^{N,i}|+N^{-1/4}\}\dif s,\\
\mE|Z_t^{N,i}|^2&\leq \hat{C}_3\int_0^t\{\mE|Z_s^{N,i}|+N^{-1/2}\}\dif s,\\
\endaligned
$$
Employing Burkhold-Davis-Gundy's inequality, Young's inequality and Jensen's inequality, there exist the constants $C_4,C_5,\hat{C}_4$, we can get
$$
\aligned
&\mE\left(\sup_{0\leq t\leq T}|Z_t^{N,i}|\right)\\
\leq & C_3\int_0^T\{\mE|Z_t^{N,i}|+N^{-1/4}\}\dif t+C_3\left(\int_0^T\mE|Z_t^{N,i}|^{2\a }\dif t\right)^{1/2}+C_3\mE\left[\int_0^{T+}\int_{U_0}|\Delta_{f_1}^{N,i}|^2\nu_0(\dif u)\dif t\right]^{1/2}\\
\leq &C_4\int_0^T\{\mE|Z_t^{N,i}|+N^{-1/4}\}\dif t+C_4\left(\int_0^T\mE|Z_t^{N,i}|\dif t\right)^{1/2}\mathbf{1}_{\{\a=1/2\}} \\
& +\left\{\frac{1}{2}\mE\left(\sup_{0\leq t\leq T}|Z_t^{N,i}|\right)+C_5\int_0^T(\mE|Z_t^{N,i}|)^{(2\a -1)}\dif t\right\}\mathbf{1}_{\{\a\in(1/2,1]\}}.\\
\endaligned
$$
and
$$
\aligned
&\mE\left(\sup_{0\leq t\leq T}|Z_t^{N,i}|^2\right)\\
\leq & \hat{C}_3\int_0^T\{\mE|Z_t^{N,i}|+N^{-1/2}\}\dif t+\hat{C}_3\int_0^T\mE|Z_t^{N,i}|^{2\a }\dif t+\hat{C}_3\mE\int_0^{T+}\int_{U_0}|\Delta_{f_1}^{N,i}|^2\nu_0(\dif u)\dif t\\
\leq &\hat{C}_4\int_0^T\{\mE|Z_t^{N,i}|+N^{-1/2}\}\dif t+\hat{C}_4\int_0^T\mE|Z_t^{N,i}|\dif t\mathbf{1}_{\{\a=1/2\}} \\
& +\hat{C}_4\int_0^T\left\{2(1-\a)\mE|Z_t^{N,i}|+(2\a-1)\mE|Z_t^{N,i}|^{2}\right\}\dif t\mathbf{1}_{\{\a\in(1/2,1]\}}.\\
\endaligned
$$

Finally, we infer that
$$
\mE\left(\sup_{0\leq t\leq T}|Z_t^{N,i}|^p\right)\leq C_{T}\left\{
\begin{aligned}
&N^{-1/4}\mathbf{1}_{\{\a=1/2\}}+N^{-\frac{2\alpha-1}{4}}\mathbf{1}_{\{\a\in(1/2,1]\}},\ \ p=1;\\
&N^{-1/2}\mathbf{1}_{\{\a=1/2\}}\ \ p=2.  \\
\end{aligned}
\right.
$$
 Thus, if $N$ is sufficiently large, the conclusion (\ref{1-4}) is established.
\end{proof}
\section{Proof of Theorem \ref{thm-3}}
\begin{lem}\label{lem-3} Suppose that Assumptions $\mathbf{(H_1)}$--$\mathbf{(H_4)}$ hold. Then for any $T>0$, and $h\in(0,1)$ sufficiently small, there is a positive constant $C_T$, we have
$$
\sup_{i=1,2,\cdots,N}\mE\left[\sup_{0\leq t\leq T}\left|X_t^{N,i}-X_t^{h,N,i}\right|\right]<C_T
\left(\ln\frac{1}{h}\right)^{-1/2}\mathbf{1}_{\{\a=\frac{1}{2}\}}+
\left(h^{\frac{(2\a-1)^2}{2}}+h^{\frac{\b(2\a-1)}{2}}\right)\mathbf{1}_{\{\a\in(\frac{1}{2},1]\}}
$$
$$
\sup_{i=1,2,\cdots,N}\mE\left[\sup_{0\leq t\leq T}\left|X_t^{N,i}-X_t^{h,N,i}\right|^2\right]<C_T
\left(\ln\frac{1}{h}\right)^{-1}\mathbf{1}_{\{\a=\frac{1}{2}\}}+
\left(h^{\frac{2\a-1}{2}}+h^{\frac{\b}{2}}\right)\mathbf{1}_{\{\a\in(\frac{1}{2},1]\}}.\\
$$
\end{lem}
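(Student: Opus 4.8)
The plan is to rerun the Yamada--Watanabe/Gr\"onwall argument from the proof of Theorem~\ref{thm-2}, with the propagation-of-chaos error there replaced by the time-discretization error of the Euler scheme; the only genuinely new input is a set of one-step estimates. Throughout fix $i$, write $Y_t:=X_t^{N,i}-X_t^{h,N,i}$ (so $Y_0=0$), $s_h:=[s/h]h$, and let $\hat\mu_s^{h,N}:=\frac1N\sum_j\d_{X_s^{h,N,j}}$ be the empirical measure of the interpolated Euler particles. \emph{Step 1.} First I would prove that, for $h\in(0,1)$ small and some $C_T$ independent of $N$,
$$
\sup_i\sup_{0\leq s\leq T}\mE\big|X_s^{h,N,i}-X_{s_h}^{h,N,i}\big|^q\leq C_T\,h^{(q\wedge 2)/2}\ \ (q\in\{\b,1,2\a-1,2\}),\qquad \sup_{0\leq s\leq T}\mE\,\mW_2\big(\hat\mu_s^{h,N},\hat\mu_{s_h}^{h,N}\big)^2\leq C_T\,h,
$$
the last estimate because $\mW_2(\hat\mu_s^{h,N},\hat\mu_{s_h}^{h,N})^2\leq N^{-1}\sum_j|X_s^{h,N,j}-X_{s_h}^{h,N,j}|^2$. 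This should follow from $\mathbf{(H_4)}$, the uniform moment bound of Corollary~\ref{cor-1}, the Burkholder--Davis--Gundy inequality and $|s-s_h|\leq h$; the only delicate point is the non-compensated $U_2$-increment, which one controls in $L^1$ directly by $\mathbf{(H_4)}$ and in $L^2$ after observing that $\mathbf{(H_3)}$ forces $f_1(\cdot,\cdot,u)\in L^2(\nu_1|_{U_2})$.

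\emph{Step 2.} Apply Lemma~\ref{b-2} to $V_\varepsilon(|Y_t|^p)$, $p\in\{1,2\}$. This produces, exactly as in the expansions $\sum_i\mI^{\l}_{i,\varepsilon}$ and $\sum_i\cD^{(k)}_{i,\varepsilon}$ above, a drift term with $b$, a second-order term with $\s$, a Brownian martingale, a $U_0$-compensator term, a $\widetilde N_0$-martingale and a pure-jump $U_2$-term. In each coefficient $g\in\{b,\s,f_0,f_1\}$ I would split
$$
g\big(X_s^{N,i},\hat\mu_s^{N}\big)-g\big(X_{s_h}^{h,N,i},\hat\mu_{s_h}^{h,N}\big)=\Big[g\big(X_s^{N,i},\hat\mu_s^{N}\big)-g\big(X_s^{h,N,i},\hat\mu_s^{h,N}\big)\Big]+\Big[g\big(X_s^{h,N,i},\hat\mu_s^{h,N}\big)-g\big(X_{s_h}^{h,N,i},\hat\mu_{s_h}^{h,N}\big)\Big],
$$
a ``comparison'' part plus a ``discretization'' part, estimated respectively via $\mathbf{(H_1)}$--$\mathbf{(H_3)}$ (as in Theorems~\ref{thm-1}--\ref{thm-2}) and via Step~1.

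\emph{Step 3.} In the comparison part, the $b_1$-contribution becomes $\leq 0$ after multiplication by $DV_\varepsilon$ by monotonicity, i.e.\ by (\ref{3-1}); the measure-dependence of $b_1$ and all of $b_2$ are Lipschitz and, using $|DV_\varepsilon|\leq 2|Y_s|$ together with $\mW_2(\hat\mu_s^N,\hat\mu_s^{h,N})^2\leq N^{-1}\sum_j|Y_s^{N,j}|^2$ and the identical distribution of $(Y^{N,j})_j$, contribute $C\,\mE|Y_s|^p$; the $\s$-part of the second-order term is controlled through (\ref{c-4}) (on the support $|Y_s|^2\in[\varepsilon/\l,\varepsilon]$ one gains a factor $1/\ln\l$), the jump terms by the Lagrange/Young device already used for $\mI^{\l}_{5,\varepsilon}$ and $\cD^{(k)}_{4,\varepsilon}$--$\cD^{(k)}_{6,\varepsilon}$ under $\mathbf{(H_3)}$, and the Brownian and $\widetilde N_0$-martingales by BDG and Young (which, through the interpolation $a^{2\a}\leq 2(1-\a)a+(2\a-1)a^2$, reduces the H\"older-$\a$ increments to linear and lower-order ones). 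The discretization part, via $|DV_\varepsilon|\leq 2|Y_s|$, Young and Step~1, should contribute exactly the powers of $h$ in the statement, the slowest being $\mE|X_s^{h,N,i}-X_{s_h}^{h,N,i}|^{\b}\leq C_Th^{\b/2}$ together with the $\s$-increment after the same interpolation. For $\a\in(\tfrac12,1]$ one then sends $\varepsilon\downarrow 0$ and reaches, with $u_p(t):=\mE\sup_{0\leq s\leq t}|Y_s|^p$, a Gr\"onwall inequality $u_p(t)\leq C\int_0^t u_p(s)\,\dif s+R_p(h)$ with $R_1(h)\leq C(h^{(2\a-1)^2/2}+h^{\b(2\a-1)/2})$ and $R_2(h)\leq C(h^{(2\a-1)/2}+h^{\b/2})$, yielding the claimed bounds.

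\emph{The main obstacle.} The crux is the H\"older-$\tfrac12$ case $\a=\tfrac12$, where $\varepsilon$ cannot be taken to $0$: on the support $|Y_s|^2\in[\varepsilon/\l,\varepsilon]$ one only has $V_\varepsilon''\leq 2\l=2\e^{1/\varepsilon}$, so the discretization part of the second-order $\s$-term is bounded merely by $C\e^{1/\varepsilon}\mE|X_s^{h,N,i}-X_{s_h}^{h,N,i}|\leq C\e^{1/\varepsilon}h^{1/2}$, while the comparison part leaves an irreducible $C/\ln\l=C\varepsilon$. Keeping $\varepsilon$ fixed and running Gr\"onwall would yield a bound of the form $u_p(t)\leq C\big(\varepsilon^{p/2}+\e^{p/(2\varepsilon)}h^{p/2}+\cdots\big)$, and balancing $\e^{1/(2\varepsilon)}h^{1/2}\sim\varepsilon^{1/2}$, i.e.\ choosing $1/\varepsilon$ of order $\ln(1/h)$, is what both produces and caps the rate at $(\ln\tfrac1h)^{-1/2}$ for $p=1$ and $(\ln\tfrac1h)^{-1}$ for $p=2$. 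Carrying out this $\varepsilon$-versus-$h$ optimization while keeping the Gr\"onwall constants (which carry the $\e^{1/\varepsilon}$) from exploding, and simultaneously bookkeeping the non-compensated $U_2$-jump, is where I expect the real work to lie.
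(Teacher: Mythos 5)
Your outline matches the paper's proof essentially step for step. The paper's own argument is extremely terse: it sets $Z_t^{h,N,i}:=X_t^{N,i}-X_t^{h,N,i}$ and $Z_{t_h}^{h,N,i}:=X_t^{h,N,i}-X_{t_h}^{h,N,i}$, applies It\^o to $V_{\l,\eps}(|Z_t^{h,N,i}|)$, records the single inequality
\[
|Z_t^{h,N,i}|\leq \eps+\widetilde C_1\int_0^t\Big\{|Z_s^{h,N,i}|+|Z_{s_h}^{h,N,i}|+|Z_{s_h}^{h,N,i}|^{\b}+\mW_2(\mu_s^N,\mu_{s_h}^{h,N})+\tfrac{|Z_s^{h,N,i}|^{2\a}+|Z_{s_h}^{h,N,i}|^{2\a}}{|Z_s^{h,N,i}|\ln\l}\mathbf{1}_{[\eps/\l,\eps]}(|Z_s^{h,N,i}|)\Big\}\dif s,
\]
plus the Wasserstein bound $\mE\mW_2(\mu_s^N,\mu_{s_h}^{h,N})\leq\mE|Z_s^{h,N,i}|+\mE|Z_{s_h}^{h,N,i}|$, and then defers everything else (one-step estimate, $\eps$-versus-$h$ balance, Gr\"onwall) to Lemma~3.4 of \cite{BH}. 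Your Steps~1--3 are precisely that deferred content, so the two approaches coincide.

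Two points in your ``main obstacle'' paragraph are imprecise, though they do not change the conclusion. First, you write that on the support $|Y_s|^2\in[\eps/\l,\eps]$ one only has $V_\eps''\leq 2\l=2\e^{1/\eps}$. If you work with $V_\eps(|Y|^2)$, the quantity entering It\^o is $D^2V_\eps(|Y|^2)=4V_\eps''(|Y|^2)|Y|^2+2V_\eps'(|Y|^2)$, and the first summand has the cancellation $4V_\eps''(|Y|^2)|Y|^2\leq 8/\ln\l=8\eps$; that piece never produces the exponential. The genuine source of the $\e^{1/\eps}$ is the \emph{cross term} in which the one-step increment, not $|Y_s|$ itself, feeds the H\"older bound on $\s$: in the paper's version with $V_{\l,\eps}(|Z|)$, it is
\[
\frac{|Z_{s_h}^{h,N,i}|^{2\a}}{|Z_s^{h,N,i}|\ln\l}\mathbf{1}_{[\eps/\l,\eps]}(|Z_s^{h,N,i}|)\leq\frac{\l}{\eps\ln\l}\,|Z_{s_h}^{h,N,i}|^{2\a}=\e^{1/\eps}|Z_{s_h}^{h,N,i}|^{2\a},
\]
because the denominator $|Z_s^{h,N,i}|$ is not controlled by the numerator $|Z_{s_h}^{h,N,i}|$. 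Attaching your $\eps\sim 1/\ln(1/h)$ balance to this term is the right remedy. Second, the sentence ``For $\a\in(\tfrac12,1]$ one then sends $\eps\downarrow 0$'' is not quite right in the $V_\eps(|Z|)$ framework: the same $\e^{1/\eps}h^{\a}$ term forbids a free limit, and the off-looking exponents $h^{(2\a-1)^2/2}$, $h^{\b(2\a-1)/2}$ in the statement are precisely the footprint of choosing $\eps=\eps(h)$ for $\a>\tfrac12$ as well (exactly as in Bao--Huang). With those two corrections, your plan carries through and is the same as the paper's.
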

\begin{proof}For any $i=1,2,\cdots,N$, set
$$
Z_t^{h,N,i}:=X_t^{N,i}-X_t^{h,N,i},\ \  Z_{t_h}^{h,N,i}:=X_t^{h,N,i}-X_{t_h}^{h,N,i}
$$
and
$$
 \Delta_{f_{\kappa}}^{h,N,i}:=f_{\kappa}(X_{s-}^{N,i},\mu^{N}_s,u)-f_{\kappa}(X_{s_h}^{h,N,i},\mu_{s_h}^{h,N},u)\ \ \kappa=0,1.
$$
Applying It\^o's formula for $V_{\lambda,\varepsilon}(|Z_t^{h,N,i}|)$,
and using a similar proof method of Lemma 3.4 in \cite{BH} and Theorem 1.2, and combining with $\mathbf{(H_1)}$--$\mathbf{(H_3)}$, we infer
$$
\aligned
|Z_t^{h,N,i}|\leq& \eps+\widetilde C_{1}\int_0^t\Big\{|Z_s^{h,N,i}|+|Z_{s_h}^{h,N,i}|+|Z_{s_h}^{h,N,i}|^{\b }+\mW_2(\mu^{N}_s,\mu_{s_h}^{h,N})\Big.\\
&\Big.+\frac{1}{|Z_s^{h,N,i}|\ln \lambda}\mathbf{1}_{\{[\eps/\lambda,\eps]\}}(|Z_s^{h,N,i}|)\left(|Z_s^{h,N,i}|^{2\a }+|Z_{s_h}^{h,N,i}|^{2\a }\right)\Big\}\dif s\\
\endaligned
$$
where $\widetilde C_{1}$ is a positive constant.
Meanwhile, using the following nature of $\mW_p$-Wasserstein distance:
$$
\aligned
\mE\mW_2(\mu^{N}_s,\mu_{s_h}^{h,N})\leq &\mE|Z_s^{h,N,i}|+\mE|Z_{s_h}^{h,N,i}|,\\
\mE\mW_2(\mu^{N}_s,\mu_{s_h}^{h,N})^2\leq &\mE|Z_s^{h,N,i}|^2+\mE|Z_{s_h}^{h,N,i}|^2,\\
\endaligned
$$
By the similar approach of Lemma 3.4 in \cite{BH}, then the conclusion can be obtained directly.
\end{proof}

\begin{proof}[Proof of Theorem \ref{thm-3}]
Combining Theorem \ref{thm-2} and Lemma \ref{lem-3}, the proof of Theorem \ref{thm-3} can be complete.
\end{proof}
\section{Data Availability Statement}
All data, models, and code generated or used during the study appear in the submitted article.


\end{document}